\documentclass[11pt,reqno]{amsart}
\usepackage[T1]{fontenc}
\usepackage{amsmath,amsthm,amssymb,mathrsfs}
\usepackage{lmodern,microtype,mathtools,bm,enumitem,needspace}
\usepackage{xcolor}
\usepackage{hyperref}
\usepackage{bookmark}
\usepackage[nameinlink,capitalize,noabbrev]{cleveref}
\microtypesetup{expansion=false}
\setlist[itemize]{leftmargin=2.2em,itemsep=0.15em,topsep=0.25em}
\newtheorem{theorem}{Theorem}[section]
\newtheorem{lemma}[theorem]{Lemma}
\newtheorem{proposition}[theorem]{Proposition}
\theoremstyle{definition}
\newtheorem*{AIuse}{AI Usage Statement}
\theoremstyle{remark}
\newcommand{\Hn}{\mathbb H^n}
\newcommand{\Haus}{\mathcal H}
\newcommand{\LTheta}{{L_\Theta}}
\newcommand{\gTheta}{g_1}
\DeclareMathOperator{\spt}{spt}
\newcommand{\RR}{\mathbb R}
\newcommand{\eps}{\varepsilon}
\newcommand{\dd}{\,\mathrm d}
\newcommand{\grad}{\nabla_H}
\newcommand{\divh}{\operatorname{div}_H}
\newcommand{\Shape}{A^H}
\newcommand{\En}{\mathcal E}
\newcommand{\BC}{\mathcal B^\eps}
\newcommand{\Ca}{c}
\newcommand{\AngleError}{\mathsf E_u}
\newcommand{\ShapeError}{\mathsf E_A}
\newcommand{\PSphere}{\mathcal S}

\newcommand{\CentreError}{\mathsf E_C}
\newcommand{\Aone}{A^1}
\newcommand{\Aeps}{A^\eps}
\newcommand{\Amer}{A^1_{00}}
\newcommand{\Amodel}{A^P}
\newcommand{\Hhor}{H_0}
\newcommand{\Hone}{H^1}
\newcommand{\Heps}{H^\eps}
\newcommand{\Hpr}{h}
\newcommand{\None}{N^1}
\newcommand{\Neps}{N^\eps}
\newcommand{\Dil}{\mathcal D}
\newcommand{\Xvp}{X_F^{\mathrm{vp}}}
\newcommand{\Qhor}{Q_\Sigma}
\newcommand{\Qonehor}{Q^H}
\newcommand{\Qoneeps}{Q^\eps}
\newcommand{\vT}{v^1_T}
\newcommand{\vF}{v^1_{X_F}}
\newcommand{\vK}{v^1_{X_K}}
\DeclareMathOperator{\Id}{Id}
\newcommand{\Ric}{\operatorname{Ric}}
\newcommand{\ip}[2]{\langle #1,#2\rangle}
\newcommand{\ResultBookmark}[3]{%
  \ifcsname r@#2\endcsname
    \bookmark[level=#1,dest={\getrefbykeydefault{#2}{anchor}{}}]{#3 \ref*{#2}: \nameref*{#2}}%
  \fi
}
\numberwithin{equation}{section}
\allowdisplaybreaks[1]
\title[Quantitative Centre Rigidity]{Quantitative Centre Rigidity under\\
Riemannian Approximation in Heisenberg Groups}
\author{Wei-Chun Chen}
\address{Department of Mathematics, National Taiwan University, Taipei 10617, Taiwan}
\email{b14201028@ntu.edu.tw}

\author{Wei-Ting Kao}
\address{Institute of Mathematics, Academia Sinica, Taipei 106319, Taiwan}
\email{wtkao@as.edu.tw}

\author{Chun-Kai Lien}
\address{Department of Mathematics, National Taiwan University, Taipei 10617, Taiwan}
\email{d13221002@ntu.edu.tw}
\hypersetup{hypertexnames=false,hidelinks,
  pdftitle={Quantitative Centre Rigidity under Riemannian Approximation in Heisenberg Groups},
  pdfauthor={Wei-Chun Chen, Wei-Ting Kao, and Chun-Kai Lien},
  pdfsubject={Quantitative centre rigidity for stationary stable prescribed-curvature boundaries},
  pdfkeywords={Heisenberg group, Pansu sphere, Riemannian approximation,
    prescribed curvature, volume-constrained stability, horizontal perimeter,
    characteristic set, centre reconstruction}}
\date{}
\subjclass[2020]{Primary 53C17, 53C24; Secondary 49Q10, 53C42}
\keywords{Heisenberg group, quantitative rigidity, Riemannian approximation,
  volume-constrained stability}
\begin{document}
\begin{abstract}
For $n\ge2$, we prove a quantitative centre-rigidity estimate for
bounded sets of finite Riemannian perimeter that are stationary and
volume-constrained stable for normalized Riemannian perimeter with
prescribed curvature $h_j$.
Under confinement to a fixed compact set and cutoffs of vanishing full surface
Dirichlet energy on the smooth regular boundary, the squared $L^2$
norms of the horizontal centre derivative, angle defect and weighted
shape defect are bounded by
$C(\eps_j^2+\|h_j-2n\|_{C^2})|\Omega_j|$ when the parenthesized
quantity is sufficiently small. The excess of normalized Riemannian
perimeter over horizontal perimeter satisfies the same bound.
\end{abstract}
\maketitle

\section{Introduction}\label{sec:introduction}

On the $(2n+1)$-dimensional Heisenberg group $\Hn$, let
$g_\eps$ make the standard frame $X_k,Y_k$ ($1\le k\le n$)
and $\eps T$ orthonormal. Horizontal lengths stay fixed,
while the length of the vertical field $T$ is $\eps^{-1}$.
For a smooth bounded region $\Omega$, the normalized area and volume are
\[
 P_\eps(\Omega)=\eps\operatorname{Area}_{g_\eps}(\partial\Omega),
 \qquad |\Omega|=\eps\operatorname{Vol}_{g_\eps}(\Omega).
\]
For each fixed set $\Omega$ of finite Riemannian perimeter,
$P_\eps(\Omega)\downarrow P_0(\Omega)=P_H(\Omega)$ as $\eps\downarrow0$.
At fixed $\eps>0$, volume-constrained stationarity and stability for
$P_\eps$ agree with those of $g_\eps$-area.

In $\mathbb H^1$, Ni~\cite{Ni} studied invariant Riemannian CMC
surfaces and their sub-Riemannian limits. Torralbo and
Urbano~\cite{TorralboUrbano} proved volume-preserving stability of
Riemannian CMC spheres. Franceschi, Montefalcone and
Monti~\cite{FranceschiMontefalconeMonti} constructed explicit CMC sphere
families converging to Pansu spheres and obtained quantitative area
comparisons for volume-preserving perturbations supported in vertical
half-cylinders. The estimate below concerns stationary stable
boundaries in $\Hn$, $n\ge2$, and controls their centre, angle and
shape defects.

For a prescribed curvature function $\Hpr$, stationarity of
\[
 \Omega\longmapsto P_\eps(\Omega)-\int_\Omega \Hpr\dd x
\]
gives $\Heps=\Hpr$ on a smooth boundary, where $\Heps$ is the trace
mean curvature in $g_\eps$. The Hessian of this functional is restricted
to normal speeds with zero volume derivative. The prescribed function may vary
in space; its deviation from $2n$, the horizontal mean curvature of
the unit Pansu sphere, enters the estimate below.

On a unit Pansu sphere, the centre reconstructed from each point
and its outward normal is constant~\cite[Section~2]{Rigidity}.
Write the reconstruction as $\Ca_j=(z_{c,j},t_{c,j})$ and set
$\mathsf E_{C,j}=D_H\Ca_j$ and
$\mathsf E_{u,j}=\nabla_Hu_j+e_{n,j}$, where $u_j$ is the normal angle
and $e_{n,j}$ is the characteristic direction.
The defect $\mathsf E_{A,j}$ is the difference between the horizontal
shape operator and the unit-sphere operator.
These quantities are defined in Section~\ref{sec:setup};
$D_H$ denotes horizontal tangential differentiation and
$d\sigma_H$ is horizontal perimeter measure.
Put
\[
 \Delta P_\eps(\Omega)=P_\eps(\Omega)-P_0(\Omega)\ge0,
 \qquad \Delta P_j=\Delta P_{\eps_j}(\Omega_j),\qquad
 \delta_j=\eps_j^2+\|\Hpr_j-2n\|_{C^2(U)}.
\]
On $\{\cos u_j\ge1/4\}$, the dilation-correction coefficients are
$O(\eps_j^2)$; the weighted cap integral in
\eqref{eq:scaling-excess-weight} is bounded by $\tfrac83\Delta P_j$.
For stationary, volume-constrained stable families satisfying
\textup{(H1)--(H2)} of Section~\ref{sec:stable-families},
Theorem~\ref{thm:absorption} gives constants $C,\delta_*>0$ such that
\[
 \delta_j\le\delta_*\quad\Longrightarrow\quad
 \int_{\Sigma_j}\bigl(|\mathsf E_{C,j}|^2
       +|\cos u_j\,\mathsf E_{A,j}|^2+|\mathsf E_{u,j}|^2\bigr)
       \dd\sigma_{H,j}+\Delta P_j\le C\delta_j|\Omega_j|.
\]
The constants depend only on the dimension and the fixed ambient sets.
The shape defect carries the factor $\cos u_j=|N_{H,j}|$,
which vanishes at characteristic points.

The nonnegative energy
$\En=\int(|D_Hz_c|^2+|\mathsf E_u|^2)\dd\sigma_H$ controls all three
defects by the centre derivative bound and the square decomposition
in Section~\ref{sec:centre-energy}.
The centre identity of \cite[Theorem~4.1]{Rigidity} and horizontal
curvature linearization of \cite[Lemma~2.8(i)]{JacobiSpectrum}
relate this energy to a Jacobi density.
A three-profile contraction compares that density with the elliptic
$P_\eps$ Hessian and retains a negative meridional curvature square.
Correcting the test field's volume flux makes constrained stability
applicable. The prescribed-curvature residuals are bounded by
$C\|\Hpr_j-2n\|_{C^2(U)}(|\Omega_j|+\En_j)$.
Combining the cap contribution with the negative dilation excess
and absorbing the energy term gives the estimate.

The local angle and Jacobi identities hold on the noncharacteristic boundary.
Proposition~\ref{sel:prop:physical-form} gives finite curvature
energy and cutoff closure for admissible speeds.
Combining the centre, curvature and central-variation densities
under a common cutoff yields \eqref{eq:stationary-contraction}.

\begin{AIuse}
We used ChatGPT 6.0 Astra to assist in extending the principal
computations of our companion paper on stable rigidity to the
setting of Riemannian approximation and in carrying out complex
calculations. The main ideas of this paper were developed by the
authors. All AI-assisted mathematical derivations and calculations
were independently verified by the authors. The authors reviewed
and approved the final manuscript and take full responsibility
for its mathematical content.
\end{AIuse}

\section{Geometry, elliptic perimeter, and admissible variations}
\label{sec:setup}\label{sec:stable-families}\label{sec:centre-reconstruction}

The point and outward normal of a regular hypersurface determine a
candidate centre of a unit Pansu sphere. Horizontal derivatives of
that centre and the normal angle define $\En$; the horizontal shape
identity compares $\En$ with the weighted shape defect. The elliptic
perimeter, stability and capacity assumptions make these quantities and the
normal speeds used in second variation integrable and admissible.

\subsection{Normal angle and centre reconstruction}
\label{subsec:normal-angle-centre}

Let $T=\partial_t$ and identify horizontal vectors with their
coefficients in the frame
\[
 X_k=\partial_{x_k}+y_k\partial_t,\qquad
 Y_k=\partial_{y_k}-x_k\partial_t,\qquad [X_k,Y_k]=-2T,\qquad JX_k=Y_k.
\]
The group law is
$(z,t)\cdot(w,\tau)=(z+w,t+\tau-\langle Jz,w\rangle)$.
The contact form is
\[
 \Theta=dt-\sum_k(y_k\dd x_k-x_k\dd y_k).
\]
Put $\xi=\ker\Theta$. Thus $d\Theta|_\xi=2\sum_kdx_k\wedge dy_k$, and
Haar measure is Lebesgue measure $\dd x$ in these coordinates.
We write $\mathcal H$ for Hausdorff measures; see \cite{FSSC}
for horizontal perimeter.

The Levi metric on $\xi$ is $\LTheta=\tfrac12d\Theta(\cdot,J\cdot)$.
For $\eps>0$ put $g_\eps=\LTheta+\eps^{-2}\Theta^2$, so
$g_1=\LTheta+\Theta^2$ and $dV_{\gTheta}=\dd x$.
On a smooth hypersurface $\Sigma$, put
$d\sigma_1=d\Haus_{\gTheta}^{2n}|_\Sigma$.
Write $\Neps$ for the outward $g_\eps$-unit normal,
$\nabla^\eps$ for its Levi--Civita connection,
$\Aeps(X)=\nabla^\eps_X\Neps$, $\Heps=\operatorname{tr}\Aeps$, and
$d\sigma_\eps$ for its surface measure. Surface gradients are
$\nabla^\eps_\Sigma$; inner products and norms default to $g_1$
except on $g_\eps$ tensors.

For the outward $g_1$-unit normal, set
$N^1_T=\langle\None,T\rangle$ and $N_H=\None-N^1_TT$. Put
$\Sigma^*=\{x\in\Sigma:|N_H(x)|>0\}$. The horizontal perimeter measure
and the normal data on $\Sigma^*$ are
\begin{equation}
d\sigma_H=|N_H|\,d\sigma_1,\qquad
 e_{2n}=-N_H/|N_H|,\qquad e_n=-Je_{2n},
 \qquad \alpha=N^1_T/|N_H|.
\label{sel:tag-2-1}
\end{equation}
Write $\grad$ for the horizontal tangential gradient on $\Sigma^*$.
Integrals against $d\sigma_H$ of quantities defined only on $\Sigma^*$
are taken there: the characteristic set has zero $d\sigma_H$ measure.
Set
\begin{equation}\label{eq:angle}
 \begin{aligned}
 &u=\arctan\alpha,\qquad \cos u=|N_H|>0,\qquad \sin u=N^1_T,\\
 &\None=-(\cos u)e_{2n}+(\sin u)T,\qquad
 e_0=(\sin u)e_{2n}+(\cos u)T.
 \end{aligned}
\end{equation}

For an ambient field $X$, define its horizontal and Riemannian normal
speeds and its volume flux by
\[
 v_X^H=\langle X,-e_{2n}+\alpha T\rangle,\qquad
 v_X^\eps=\langle X,\Neps\rangle_{g_\eps},\qquad
 I_X=\delta V[X].
\]
The speed $v_X^1=\langle X,\None\rangle$ satisfies
\[
 v_X^1=\cos u\,v_X^H,\qquad
 v_X^H\,d\sigma_H=v_X^1\,d\sigma_1
                 =\eps v_X^\eps\,d\sigma_\eps,\qquad
 I_X=\int v_X^H\dd\sigma_H.
\]
The speed-measure product extends across characteristic points as the
volume-flux measure.

Write $\PSphere_a$ for the unit-radius Pansu sphere centred at $a$.
The candidate centre determined by $x=(z,t)$ and its normal data is
\cite[Section~2]{Rigidity}
\begin{align}
 z_c&=z+{\cos^{2} u}e_{2n}-{\sin u}{\cos u} e_n,\notag\\
 t_c&=t+\langle Jz_c,z\rangle
       -\frac{u+{\sin u}{\cos u}}{2},\qquad
 \Ca=(z_c,t_c).
 \label{eq:centres}
\end{align}
For $x=(z,t)$, a horizontal unit vector $v$, and $|u|\le\pi/2$,
define $a=(z_a,t_a)$ by
\eqref{eq:centres} with $e_{2n}=-v$ and $e_n=Jv$.
Since $v,Jv$ are orthonormal, $x\in\PSphere_a$:
\begin{equation}\label{eq:profile}
 |z-z_a|={\cos u},\qquad
 t+\langle Jz_a,z\rangle-t_a
                      =\frac{u+{\sin u}{\cos u}}{2}.
\end{equation}
For ${\cos u}>0$, the radial direction is
$e=(z-z_a)/|z-z_a|={\cos u}v+{\sin u}Jv$, and the sphere's outward horizontal
normal is $\cos u\,e-\sin u\,Je=v$.
At $\cos u=0$, the formula gives its two poles.

The formula $z_c=z-|N_H|N_H-N^1_TJN_H$ is $C^1$ at $N_H=0$.
For $t_c$, use $u=\arcsin N^1_T$ and
\[
 \frac{d}{ds}\bigl(\arcsin s+s\sqrt{1-s^2}\bigr)
 =2\sqrt{1-s^2}\longrightarrow0\qquad(s\to\pm1).
\]
Extending this scalar function constantly past $\pm1$ makes it $C^1$,
so $\Ca$ extends in $C^1$ across the characteristic set
\cite[Lemma~2.3]{Rigidity}.

On the unit Pansu sphere,
$\grad u=-e_n$; set
\begin{equation}\label{eq:angle-defect}
 \AngleError=\grad u+e_n.
\end{equation}
For $z_c$ and $\Ca$, $D_H$ is componentwise differentiation along
$\xi\cap T\Sigma^*$, with Hilbert--Schmidt norm from $\gTheta$ and
the Euclidean target metrics; put $\CentreError=D_H\Ca$.
The energy density and its integral are
\begin{equation}\label{eq:defect}
 \mathfrak e:=|D_Hz_c|^2+|\AngleError|^2,\qquad \En=\int\mathfrak e\dd\sigma_H.
\end{equation}

On a compact set $\mathcal K\subset\Hn$, the vertical centre identity
\cite[(2.11)]{Rigidity} and Cauchy--Schwarz give, for an orthonormal
frame $(E_a)$ of $\xi\cap T\Sigma^*$,
\begin{equation}\label{eq:centre-derivative-bound}
 \begin{aligned}
 |\CentreError|^2
 &=|D_Hz_c|^2+\sum_a|\langle J(E_az_c),z\rangle
                  -\cos^2u\langle\AngleError,E_a\rangle|^2\\
 &\le C(\mathcal K)\mathfrak e\qquad((z,t)\in\mathcal K).
 \end{aligned}
\end{equation}

\subsection{Shape and angle defects}\label{sec:centre-energy}

On the horizontal bundle $\xi$, write the flat Tanaka--Webster
connection as $\nabla^\xi$, determined by
$\nabla^\xi_U X_k=\nabla^\xi_U Y_k=0$ for every $U$
\cite[Section~2]{JacobiSpectrum}. On an oriented smooth hypersurface
$\Sigma$, let $\nabla^{1,\Sigma}$ be the induced Levi--Civita surface
connection.

For $X\in\xi\cap T\Sigma^*$, the symmetric horizontal shape operator
\cite[Proposition~2.2]{CCHY} and its trace are
\[
 \Shape(X)=-\nabla^\xi_Xe_{2n}+\alpha(JX-\ip{X}{e_n}e_{2n}),
 \qquad \Hhor=\operatorname{tr}\Shape.
\]
The Riemannian shape $\Aone(X)=\nabla^1_X\None$ has the block form
\begin{equation}\label{eq:blocks}
 \Aone=\begin{pmatrix}\Aone_{HH}&\AngleError\\\AngleError^\top&\Amer\end{pmatrix}
 \quad\hbox{on }(\xi\cap T\Sigma^*)\oplus\RR e_0,
\end{equation}
where $\Amer=e_0u$ is the curvature in the $e_0$ (meridional) direction and
\begin{equation}\label{eq:geometry}
 \Aone_{HH}={\cos u}\Shape,\qquad
 \Hone=\operatorname{tr}\Aone={\cos u}\Hhor+\Amer,\qquad
 \operatorname{div}^1_\Sigma e_0=-{\sin u}\Hhor.
\end{equation}
Differentiating the normal also gives
$\nabla^1_\Sigma u=\Aone e_0-e_n=\AngleError-e_n+\Amer e_0$.

The unit Pansu-sphere model is
$\Amodel X=X+\langle X,e_n\rangle e_n$, so
$\operatorname{tr}\Amodel=2n$ and $|\Amodel|^2=2n+2$.
Set $\ShapeError=\Shape-\Amodel$.
The square identity \cite[(2.9)]{Rigidity} gives, on $\Sigma^*$,
\[
 \begin{aligned}
 |D_Hz_c|^2
 &=\sum_a|{\cos u}\ShapeError E_a
                 +\langle\AngleError,E_a\rangle({\cos u}e_n+{\sin u}e_{2n})|^2\\
 &=|{\cos u}\ShapeError|^2+|\AngleError|^2
             +2{\cos^{2} u}\langle\AngleError,\ShapeError e_n\rangle.
 \end{aligned}
\]
Here $(E_a)$ is a horizontal tangent orthonormal frame and
$\ShapeError$ is self-adjoint. With
$(\AngleError\otimes e_n)X=\langle e_n,X\rangle\AngleError$,
adding $|\AngleError|^2$ and completing the square yields
\begin{equation}\label{eq:defect-square}
 \begin{gathered}
 \mathfrak e=|{\cos u}\ShapeError+{\cos u}(\AngleError\otimes e_n)|^2
                  +(2-{\cos^{2} u})|\AngleError|^2,\\
 |{\cos u}\ShapeError|^2+|\AngleError|^2\le3\mathfrak e.
 \end{gathered}
\end{equation}
Indeed, $|\AngleError\otimes e_n|=|\AngleError|$ and
$\cos^2u\le2-\cos^2u$ give
\[
 |\AngleError|^2\le\mathfrak e,\qquad
 |\cos u\,\ShapeError|^2
 \le2|\cos u(\ShapeError+\AngleError\otimes e_n)|^2
       +2\cos^2u|\AngleError|^2\le2\mathfrak e.
\]
Thus $\mathfrak e=0$ forces $\AngleError=\ShapeError=0$ on $\Sigma^*$.
Since ${\cos u}\ShapeError=\Aone_{HH}-{\cos u}\Amodel$ and $\grad u=\AngleError-e_n$,
\eqref{eq:blocks} and \eqref{eq:defect-square} give
\begin{equation}\label{eq:basic-curvature-domination}
 \mathfrak e+|{\cos u}\ShapeError|^2+|\AngleError|^2+|\grad u|^2
 \le C(1+|\Aone|^2).
\end{equation}

\subsection{Elliptic perimeter and admissible families}
\label{subsec:elliptic-families}

Finite Riemannian perimeter means that the distributional gradient
of $\mathbf 1_\Omega$ in $g_1$ has finite total variation.
Write $\partial^*\Omega$ for the reduced boundary and $\None$ for its
measure-theoretic outward $g_1$-unit normal. The perimeter measure is
$\Haus_{g_1}^{2n}|_{\partial^*\Omega}$, and Gauss--Green gives
\[
 \int_\Omega\operatorname{div}_{\gTheta}X\dd x
 =\int_{\partial^*\Omega}\langle X,\None\rangle
       \dd\Haus_{\gTheta}^{2n}
\]
for every compactly supported smooth ambient field $X$.
Using $\None=N_H+N^1_TT$, define
\begin{equation}\label{eq:elliptic-perimeter}
 P_\eps(\Omega)=\int_{\partial^*\Omega}
 \sqrt{|N_H|^2+\eps^2(N^1_T)^2}\dd\Haus_{\gTheta}^{2n},
 \qquad P_0(\Omega)=P_H(\Omega).
\end{equation}
Thus $P_0(\Omega)=\int_{\partial^*\Omega}|N_H|\dd\Haus_{\gTheta}^{2n}$.
The reduced boundary and normal remain fixed as $\eps$ varies.
At $N_H=0$ the density is $\eps$, whereas the horizontal density
vanishes. The smooth boundary carrying perimeter in \textup{(H2)}
is an additional hypothesis.
The homogeneous dimension is $2n+2$.

Fix a compact set $\mathcal K$ inside a relatively compact open
set $U\subset\Hn$.
The $C^k(U)$ norms use the fixed ambient coordinates.
\begin{enumerate}[label=\textup{(H\arabic*)},leftmargin=3em]
\item \textbf{Confinement and prescribed-curvature bounds.}\label{hyp:bounds}
Each $\Omega_j\subset\mathcal K$ has finite Riemannian perimeter and
\[
 |\Omega_j|>0,\qquad P_0(\Omega_j)>0,\qquad 0<\eps_j\le1.
\]
The prescribed mean-curvature function $\Hpr_j\in C^2(U)$ satisfies
$\|\Hpr_j-2n\|_{C^2(U)}\le1$.

\Needspace{13\baselineskip}
\item \textbf{Smooth boundary and cutoffs.}\label{hyp:capacity}
An outward-oriented smooth hypersurface $\Sigma_j\subset\partial^*\Omega_j$
carries all the Riemannian perimeter of $\Omega_j$:
\[
 \Haus_{\gTheta}^{2n}(\partial^*\Omega_j\setminus\Sigma_j)=0.
\]
Characteristic points may belong to $\Sigma_j$.
Put $\sigma_{1,j}=\Haus_{\gTheta}^{2n}|_{\Sigma_j}$ and
$\Sigma_j^*=\{x\in\Sigma_j:|N_H(x)|>0\}$.
For each fixed $j$, there are
$\eta_{j,\ell}\in C_c^1(\Sigma_j^*)$ with $0\le\eta_{j,\ell}\le1$
and $\eta_{j,\ell}\to1$ $\sigma_{1,j}$-almost everywhere on $\Sigma_j$,
such that
\begin{equation}\label{eq:capacity-assumption}
 \int_{\Sigma_j}|\nabla^1_{\Sigma_j}\eta_{j,\ell}|^2\dd\sigma_{1,j}
 \longrightarrow0\qquad(\ell\to\infty).
\end{equation}
\end{enumerate}

By \textup{(H2)}, $\sigma_{1,j}(\Sigma_j\setminus\Sigma_j^*)=0$
(zero-extend the cutoffs).
Stationarity of $P_{\eps_j}(\Omega)-\int_\Omega\Hpr_j\dd x$
under compactly supported smooth ambient variations in $U$ gives,
on $\Sigma_j$,
\begin{equation}\label{eq:euler-assumption}
 H^{\eps_j}_j=\Hpr_j.
\end{equation}

The cofactor formulas give
\begin{equation}\label{sel:eq:elliptic-metric}
 dV_{g_{\eps_j}}=\eps_j^{-1}dx,\qquad
 P_{\eps_j}(\Omega)=\eps_j\operatorname{Area}_{g_{\eps_j}}(\partial^*\Omega).
\end{equation}
Thus $P_\eps$ is normalized Riemannian area. At fixed $\eps_j>0$,
the volume constraint is unchanged by the constant factor in
$dV_{g_{\eps_j}}$.

Constants $C$ depend only on $n,\mathcal K,U$;
$C_j$ may depend on $j$.
For fixed $j$, write $\Omega=\Omega_j$, $\Sigma=\Sigma_j$,
$V=|\Omega|$, $\eps=\eps_j$ and $\Delta P_\eps=\Delta P_\eps(\Omega)=\Delta P_j$.
The prescribed function remains $\Hpr_j$ on $U$, while $\Heps$
denotes geometric mean curvature on $\Sigma_j$.
All $C^k$ norms of $\Hpr_j-2n$ are taken on $U$.

\subsection{Admissible variations}
\label{sec:admissible-variations}

Let $\mathcal B^{\eps_j}_{1,j}$ be the Hessian of the functional
$\Omega\mapsto P_{\eps_j}(\Omega)-\int_\Omega \Hpr_j\dd x$
on smooth compactly supported
$\gTheta$-normal speeds in $\Sigma_j^*$, extended to compactly supported
$C^1$ speeds there by its local Jacobi form. Define the energy norm
\begin{equation}\label{eq:fixed-physical-form-norm}
 \|v\|_j^2=\int_{\Sigma_j^*}\bigl\{|\nabla^1_{\Sigma_j}v|^2
                  +(1+|\Aone_j|^2)v^2\bigr\}\dd\sigma_{1,j}.
\end{equation}
Call a bounded $C^1$ speed $v$ on $\Sigma_j^*$ \emph{admissible} if
$|\nabla^1_{\Sigma_j}v|\le C_j(1+|\Aone_j|)$ for some $C_j$.

\begin{proposition}[Curvature energy and admissible normal speeds]
\label{sel:prop:physical-form}
\ResultBookmark{2}{sel:prop:physical-form}{Proposition}%
Fix $j$ in a family satisfying \textup{(H1)--(H2)}. Assume the
functional above is stationary at $\Omega_j$ under compactly
supported smooth ambient variations in $U$, and
\[
 \mathcal B^{\eps_j}_{1,j}(v,v)\ge0
 \quad\text{for every }v\in C_c^2(\Sigma_j^*)
 \text{ with }\int_{\Sigma_j}v\dd\sigma_{1,j}=0.
\]
Then the Riemannian second fundamental form $\Aone_j$ satisfies
\begin{equation}\label{sel:tag-3-4}
 \int_{\Sigma_j}(1+|\Aone_j|^2)\dd\sigma_{1,j}<\infty.
\end{equation}
For admissible $v,w$, the cutoffs $\eta_{j,\ell}$ in \textup{(H2)} satisfy
$\|\eta_{j,\ell}v-v\|_j\to0$ and similarly for $w$.
The resulting finite symmetric pairing is independent of the
choice of common cutoffs:
\begin{equation}\label{sel:tag-3-6}
 \mathcal B^{\eps_j}_{1,j}(v,w)=\lim_{\ell\to\infty}
 \mathcal B^{\eps_j}_{1,j}(\eta_{j,\ell}v,\eta_{j,\ell}w).
\end{equation}
If $\int v\dd\sigma_{1,j}=0$, then $\mathcal B^{\eps_j}_{1,j}(v,v)\ge0$.
The same cutoffs satisfy
\begin{equation}\label{eq:cap-est}
\int_{\Sigma_j}(1+|\Aone_j|)|\nabla^1_{\Sigma_j}\eta_{j,\ell}|\dd\sigma_{1,j}\longrightarrow0.
\end{equation}
For smooth compactly supported ambient fields $X,Y$ in $U$, the
ambient Hessian equals
$\mathcal B^{\eps_j}_{1,j}(\langle X,\None\rangle,
\langle Y,\None\rangle)$. This identification extends by continuity
to bounded $C^1$ surface traces with
$|\nabla^1_{\Sigma_j}X|+|\nabla^1_{\Sigma_j}Y|
\le C_j(1+|\Aone_j|)$.
\end{proposition}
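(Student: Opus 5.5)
The plan is to begin by writing the local Jacobi form of $\mathcal B^{\eps_j}_{1,j}$ on $\Sigma_j^*$ in $g_1$ data. Since $P_\eps$ is the integral of the elliptic density $F_\eps(\None)=\sqrt{|N_H|^2+\eps^2(N^1_T)^2}$ against $d\sigma_1$, the second variation with respect to a $g_1$-normal speed $v$ takes the anisotropic form
\[
 \mathcal B(v,v)=\int_{\Sigma}\bigl\{\langle \mathcal A\nabla^1_\Sigma v,\nabla^1_\Sigma v\rangle
 +2\langle b,\nabla^1_\Sigma v\rangle v - q\,v^2\bigr\}\dd\sigma_1 .
\]
Here $\mathcal A$ is uniformly elliptic on $\Sigma$, with $\lambda_j\Id\le\mathcal A\le\Lambda_j\Id$, because $D^2F_\eps$ restricted to $\None^\perp$ is positive definite with bounds depending on $\eps_j>0$. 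The vector $b$ is bounded, and $q=\langle \mathcal A\Aone,\Aone\rangle$ (the anisotropic $|A|^2$) plus terms bounded by $C_j(1+|\Aone|)$ coming from ambient derivatives of $F_\eps$, $g_1$ curvature and $\nabla\Hpr_j$. The point is that $q\ge\lambda_j|\Aone|^2-C_j(1+|\Aone|)$ and $|q|\le C_j(1+|\Aone|^2)$. An equivalent route is to use the genuine $g_{\eps_j}$-area Jacobi operator and convert via $\eps v^\eps d\sigma_\eps=v\,d\sigma_1$, with comparability constants depending on $j$. The ambient-Hessian identification for smooth compactly supported $X,Y$ with support meeting only $\Sigma^*$ is then the standard second-variation computation. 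Stationarity \eqref{eq:euler-assumption} kills the tangential and acceleration terms, since the first variation vanishes and $H^\eps=\Hpr$.

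For \eqref{sel:tag-3-4} I would use a Schoen--Simon--Yau/Fischer-Colbrie style test. The key is choosing test functions. Take $\phi=\eta_{j,\ell}\,\psi$, with $\psi$ ranging over a two-dimensional family $\psi\in\{1,\psi_0\}$, where $\psi_0$ is a fixed compactly supported function on $\Sigma^*$ with $\int\psi_0\ne0$ chosen away from where we estimate. Then some combination $\eta_\ell(1-c_\ell\psi_0)$ has zero mean, and $c_\ell$ stays bounded because $\int\eta_\ell\,d\sigma_1\to\sigma_1(\Sigma)<\infty$, which is finite since $\Omega_j$ has finite perimeter. Stability, the lower bound on $q$, and Young's inequality then give
\[
 \lambda_j\int\eta_\ell^2|\Aone|^2\le C_j\Bigl(\int|\nabla\eta_\ell|^2+\sigma_1(\Sigma)+\textstyle\int_{\spt\psi_0}|\Aone|^2\Bigr).
\]
Letting $\ell\to\infty$ with \eqref{eq:capacity-assumption} and monotone convergence gives \eqref{sel:tag-3-4}. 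This requires \eqref{eq:capacity-assumption}, and a mollified $C^2$ version of the cutoffs, since the stability hypothesis is stated for $C_c^2$.

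I expect this step to be the main obstacle, because of the mismatch between $C^1$ cutoffs and $C^2_c$ stability. I would handle it by first extending $\mathcal B$ to $C^1_c(\Sigma^*)$ by density: $\mathcal B$ is continuous in $H^1$ on a fixed compact subset of $\Sigma^*$, where $\Aone$ is bounded. The mean-zero constraint is preserved after adjusting by a multiple of $\psi_0$.

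The remaining claims come from finite curvature energy. For admissible $v$,
\[
 \|\eta_\ell v-v\|_j^2\le 2\int(1-\eta_\ell)^2\bigl(|\nabla v|^2+(1+|\Aone|^2)v^2\bigr)+2\|v\|_\infty^2\int|\nabla\eta_\ell|^2 .
\]
The first term tends to $0$ by dominated convergence, using $|\nabla v|^2\le C_j(1+|\Aone|^2)$ together with \eqref{sel:tag-3-4}; the second tends to $0$ by \eqref{eq:capacity-assumption}. Since $\mathcal B$ is bounded by $C_j\|v\|_j\|w\|_j$, the limit in \eqref{sel:tag-3-6} exists and is independent of the cutoffs, because any two cutoff sequences give $\|\cdot\|_j$-convergent approximations to the same $v$. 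For nonnegativity, if $\int v=0$, correct $\eta_\ell v$ by $-c_\ell\psi_0$ with $c_\ell\to0$, apply stability, and pass to the limit. Estimate \eqref{eq:cap-est} follows from Cauchy--Schwarz:
\[
 \int(1+|\Aone|)|\nabla\eta_\ell|\le\Bigl(\int(1+|\Aone|)^2\Bigr)^{1/2}\Bigl(\int|\nabla\eta_\ell|^2\Bigr)^{1/2}\to0 .
\]
Finally, for ambient $X,Y$ whose support meets characteristic points, the ambient Hessian is a bilinear form in the surface traces whose density is integrable by \eqref{sel:tag-3-4}. Comparing it with $\mathcal B(\eta_\ell\langle X,\None\rangle,\eta_\ell\langle Y,\None\rangle)$ leaves error terms of the form $\int(1-\eta_\ell^2)\cdot(\text{integrable})$ and $\int|\nabla\eta_\ell|(1+|\Aone|)$. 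Both vanish, by dominated convergence and by \eqref{eq:cap-est}. The same argument covers $C^1$ traces with the stated gradient bound, which are admissible.
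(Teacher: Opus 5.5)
Your proposal is correct and is essentially the paper's proof. The paper takes your ``equivalent route'': it writes the Hessian as the $g_{\eps_j}$-area Jacobi form, compares $1+|\Aeps|$ with $1+|\Aone|$ at fixed $j$, and tests stability with the zero-mean speeds $\eta_\ell-b_\ell\zeta$ to get $\int\eta_\ell^2|\Aeps|^2\dd\sigma_\eps\le C_j\bigl(1+\int|\nabla^\eps_\Sigma\eta_\ell|^2\dd\sigma_\eps\bigr)$; the closure, cutoff independence, mean-zero nonnegativity, \eqref{eq:cap-est} and ambient-Hessian identification then go exactly as you outline, with one small fix: the cutoffs $\eta_{j,\ell}$ are not assumed monotone in $\ell$, so the passage to \eqref{sel:tag-3-4} should use Fatou's lemma rather than monotone convergence.
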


The curvature-energy and closure assertions follow from the
fixed-$\eps_j$ Jacobi inequality in
Appendix~\ref{app:admissible-variations}. The same appendix identifies
this closure with the ambient Hessian on the surface traces of the
fields used below; the scalar norm retains its curvature weight.

For each admissible $v$, the volume-speed measures
$\eta_{j,\ell}v\dd\sigma_{1,j}$ converge to $v\dd\sigma_{1,j}$ in total variation.
Quadratic forms use the same cutoff on every speed;
linear identities use $\eta_{j,\ell}^2$. The order of limits is
\[
 \ell\to\infty\quad(j\text{ fixed}),\qquad j\to\infty.
\]
Equation~\eqref{eq:cap-est} also applies to $\eta_{j,\ell}^2$, since
$|\nabla^1_\Sigma(\eta_{j,\ell}^2)|\le2|\nabla^1_\Sigma\eta_{j,\ell}|$.
Equation~\eqref{sel:tag-3-4} makes any density bounded by
$C_j(1+|\Aone_j|^2)d\sigma_{1,j}$ integrable. Equation~\eqref{eq:cap-est}
removes fluxes bounded by
$C_j(1+|\Aone_j|)|\nabla^1_{\Sigma_j}\eta_{j,\ell}|d\sigma_{1,j}$.

Equations~\eqref{eq:basic-curvature-domination} and \eqref{sel:tag-3-4}
give $\En<\infty$.
Using $\Aone_{HH}=\cos u(\Amodel+\ShapeError)$ and
$\grad u=\AngleError-e_n$, the square identity
\eqref{eq:defect-square} gives
\begin{align}
 \int(|{\cos u}\ShapeError|^2+|\AngleError|^2)\dd\sigma_H
 &\le3\En,\label{eq:coercive}\\
 \int(|\Aone_{HH}|^2+|\AngleError|^2+|\grad u|^2)\dd\sigma_H
 &\le C(P_0(\Omega)+\En).\label{eq:bound}
\end{align}
The cutoff closure in Proposition~\ref{sel:prop:physical-form} makes
these bounds available for the global second-variation pairings.

\section{The volume-constrained test field}
\label{app:finite-master}

On the unit Pansu sphere, the horizontal dilation speed is
$F(\alpha)=1+\alpha\arctan\alpha=1+u\alpha$.
As in \cite[Section~3]{Rigidity}, the same profile $F$ defines the
ambient field $X_F$ on a general boundary; it need not equal the
dilation field.
Its flux $I_F$ need not vanish, so $X_F-(I_F/I_{\Dil})\Dil$ is the
field to which constrained stability applies. The auxiliary profile
$K$ and central translation $T$ form a Hessian contraction whose
horizontal density is related to the centre energy by
\eqref{eq:local-noncmc-balance}. The Riemannian correction retains
a negative meridional curvature square.

Let $\Dil=z\cdot\partial_z+2t\partial_t$ generate
$s\mapsto\Dil_{e^s}$, where $\Dil_r(z,t)=(rz,r^2t)$ for $r>0$.
The fields $\Dil,T$ used in global variations
are cut off in $U$ and agree with the original fields near
$\mathcal K$. Hence $\operatorname{div}\Dil=2n+2$ near $\mathcal K$, and since
$\Omega\subset\mathcal K$,
\[
 I_{\Dil}=\delta V[\Dil]=(2n+2)V>0.
\]
The horizontal gradient $\grad$ and divergence $\divh$ correspond to
$\nabla_b$ and $\operatorname{div}_H$ in \cite{Rigidity}.

\subsection{The dilation speed on the unit Pansu sphere}
\label{subsec:canonical-profiles}

Set
\[
 q_H =|\Shape|^2+4e_n \alpha+2(n+1)\alpha^2,\qquad
 \divh Z=\operatorname{div}_{H,\Sigma}Z+2\alpha\ip{Z}{e_n }.
\]
Here $Z$ is horizontal tangent, $\operatorname{div}_{H,\Sigma}$ is the
horizontal connection trace, and $\divh$ is divergence for $d\sigma_H$.
The characteristic-direction identity in \cite[Section~4.1]{Rigidity}
gives $\divh e_n=2n\alpha$.
\Needspace{4\baselineskip}
For $f,g\in C_c^1(\Sigma^*)$, define the horizontal scalar form
\[
 \Qhor(f,g)=\int
       \{\langle\grad f,\grad g\rangle-q_H fg\}\dd\sigma_H.
\]
At constant $\Hhor$ this is the Jacobi pairing in
\cite[Section~3]{Rigidity}. For variable $\Hhor$, the curvature
linearization has the additional transport term
in \eqref{eq:normal-horizontal-comparison}.

Differentiating $F$ gives
\[
 \partial_\alpha F(\alpha)=u+{\sin u}{\cos u},\qquad
 \partial_\alpha^2F(\alpha)=2{\cos^{4} u}.
\]
$\partial_\alpha F$ is twice the model height in
\eqref{eq:profile}, as in \cite[Section~3.2]{Rigidity}.

\subsection{The test field and volume correction}

Fix a smooth cutoff $\chi:[0,1]\to[0,1]$ equal to one on $[0,1/2]$
and zero on $[2/3,1]$. As in \cite[Lemma~3.4]{Rigidity}, set
$X_F=\chi(\cos u)(F/\alpha)T-(1-\chi(\cos u))Fe_{2n}$ on $\Sigma^*$.
Quotients by $\alpha$ are used only where $\chi\ne0$, so $\alpha\ne0$.
On each cap $0<\cos u\le2/3$, write $s=N^1_T=\sin u$, so $u=\arcsin s$. Then
\[
 \frac F\alpha=\arcsin s+\frac{\sqrt{1-s^2}}s,\qquad
 \frac{d}{ds}\left(\frac F\alpha\right)=-\frac{\sqrt{1-s^2}}{s^2}.
\]
As $s\to\pm1$, the value tends to $\pm\pi/2$ and the derivative to zero.
Constant extension past each endpoint is therefore $C^1$.
Composing with $N^1_T$, the cutoff construction gives a local $C^1$
ambient field through characteristic points, with horizontal speed $F$.

Set \cite[Section~4]{Rigidity}
\[
 \mathsf E_{u,n}=\langle\AngleError,e_n\rangle,\qquad
 \beta=\int \mathsf E_{u,n}\dd\sigma_H.
\]

Write $\BC(X,Y):=\mathcal B^{\eps_j}_{1,j}(\langle X,\None\rangle,\langle Y,\None\rangle)$
for the bilinear Hessian of
$\Omega\mapsto P_\eps(\Omega)-\int_\Omega \Hpr_j\dd x$ on fields
$X,Y$, before imposing the volume constraint.
Set
\[
 I_F=\delta V[X_F]=\int F\dd\sigma_H,\qquad
 \kappa=I_F/I_{\Dil},\qquad \Xvp=X_F-\kappa \Dil.
\]

\begin{lemma}[Volume-corrected field and volume flux]
\label{lem:global-domain}
\ResultBookmark{3}{lem:global-domain}{Lemma}%
Under the hypotheses of Proposition~\ref{sel:prop:physical-form},
the normal speeds of
$T,X_F,\Dil$ are admissible. The field $\Xvp$ has zero volume flux and
constrained stability applies to its speed. Moreover,
\begin{equation}\label{eq:global-support}
 \begin{gathered}
 2n I_F=(2n+1)P_0(\Omega)-\beta,\qquad
 I_F^2\le CP_0(\Omega)(P_0(\Omega)+\En),\\
 I_F+\kappa^2P_0(\Omega)+\kappa^2V+|\kappa|I_F
 \le C(P_0(\Omega)+\En)\le C(V+\En).
 \end{gathered}
\end{equation}
\end{lemma}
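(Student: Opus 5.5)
The plan is to prove admissibility first, then the flux identity by a cutoff divergence argument, and finally to bound $\kappa$ through a dilation Minkowski-type inequality that gives $P_0(\Omega)\le CV$.

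\emph{Admissibility.} I would verify this field by field, using that each normal speed is the surface trace $\langle X,\None\rangle$ of a bounded $C^1$ ambient field on the compact set $\mathcal K$.

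For $T$ the speed is $\sin u=N^1_T$. Its gradient is $\langle T,\Aone(\cdot)\rangle$ plus a term $\langle\nabla^1_{\cdot}T,\None\rangle$, and $\nabla^1T$ is bounded. Hence $|\nabla^1_\Sigma v^1_T|\le C(1+|\Aone|)$.

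For $\Dil$ the field is smooth with bounded derivatives on $U$. Differentiating $\langle\Dil,\None\rangle$ therefore gives the same type of bound with $C=C(\mathcal K,U)$.

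For $X_F$ I would use the remark after its definition. Off the caps, $(1-\chi(\cos u))F$ is a $C^1$ function of $\None$ because $\cos u\ge1/2$ there. On the caps, $F/\alpha$ is a $C^1$ function of $N^1_T$ after constant extension. So $X_F$ is a bounded $C^1$ function of $(x,\None)$ on $\mathcal K$, and the chain rule gives $|\nabla^1_\Sigma v^1_{X_F}|\le C_j(1+|\Aone|)$. Note that $v^1_{X_F}=\cos u\,F$ stays bounded even though $\alpha$ blows up.

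By linearity $\Xvp$ is admissible. Its flux is $I_F-\kappa I_{\Dil}=0$. Since $\int v\,d\sigma_1=\delta V$ for speeds of this form, Proposition~\ref{sel:prop:physical-form} applies constrained stability to its speed.

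\emph{Flux identity.} I would integrate $\divh(u\,e_n)$ against $\eta_\ell^2\,d\sigma_H$. The ingredients are $\langle\grad u,e_n\rangle=-1+\mathsf E_{u,n}$ from \eqref{eq:angle-defect}, $\divh e_n=2n\alpha$, and $F=1+u\alpha$. These give
\[
\divh(u e_n)=-1+\mathsf E_{u,n}+2n(F-1).
\]
The boundary term is bounded by $\frac\pi2\int|\nabla^1_\Sigma\eta_\ell^2|\,d\sigma_1$. This tends to zero by \eqref{eq:cap-est}, or alternatively by Cauchy--Schwarz, \eqref{eq:capacity-assumption} and finite area. The other integrands are integrable: $F\,d\sigma_H=v^1_{X_F}d\sigma_1$ is bounded, and $\mathsf E_{u,n}\in L^2(d\sigma_H)$ because $\En<\infty$. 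Letting $\ell\to\infty$ gives
\[
0=-P_0+\beta+2n(I_F-P_0),
\]
which is the first identity.

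Next, Cauchy--Schwarz gives $\beta^2\le P_0\int|\AngleError|^2\,d\sigma_H\le P_0\En$. Combined with the identity, this yields $I_F^2\le CP_0(P_0+\En)$. Since $F\ge1$ we have $I_F\ge0$, and AM--GM gives $I_F\le C(P_0+\En)$.

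\emph{Bounding $\kappa$.} This is the main obstacle, because $\kappa^2V=I_F^2/((2n+2)^2V)$ can only be controlled if $P_0\le CV$. I would get this from stationarity under $\Dil$. Using \eqref{eq:elliptic-perimeter} one checks the scaling law
\[
P_\eps(\Dil_r\Omega)=r^{2n+1}P_{\eps/r}(\Omega).
\]
Differentiating at $r=1$ gives
\[
\delta P_\eps[\Dil]=(2n+1)P_\eps-\eps\,\partial_\eps P_\eps.
\]
The correction term is
\[
\eps\,\partial_\eps P_\eps=\int\frac{\eps^2(N^1_T)^2}{\sqrt{|N_H|^2+\eps^2(N^1_T)^2}}\,d\Haus^{2n}_{g_1},
\]
and it satisfies $\eps\,\partial_\eps P_\eps\le P_\eps-P_0\le P_\eps$. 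Hence $\delta P_\eps[\Dil]\ge2nP_\eps$.

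Stationarity, tested with the cut-off dilation field (which equals $\Dil$ near $\mathcal K$), gives
\[
\delta P_\eps[\Dil]=\int_\Omega \Hpr_j\operatorname{div}\Dil\,dx=(2n+2)\int_\Omega\Hpr_j\,dx.
\]
Since $|\Hpr_j|\le2n+1$ by (H1), this is at most $(2n+2)(2n+1)V$. Therefore $2nP_0\le2nP_\eps\le CV$.

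It then follows that $\kappa^2V$ and $|\kappa|I_F=I_F^2/((2n+2)V)$ are both at most $CP_0(P_0+\En)/V\le C(P_0+\En)$. Also $\kappa^2P_0\le C\kappa^2V$. Finally $P_0+\En\le C(V+\En)$, which completes \eqref{eq:global-support}.
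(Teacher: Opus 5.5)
Your proposal follows the paper's own proof essentially step by step. The steps are: admissibility of the three speeds; the cutoff identity for $\divh(u e_n)=\mathsf E_{u,n}+2nF-(2n+1)$; Cauchy--Schwarz for $\beta$; and the dilation scaling law giving $P_0(\Omega)\le CV$. For $X_F$ you argue admissibility via the chain rule, treating $X_F$ as a $C^1$ function of $\None$, whereas the paper differentiates $\vF=\cos u+u\sin u$ explicitly; both checks are equivalent.

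Two slips in the dilation step need correcting, though neither affects the conclusion.

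\emph{First slip.} The first variation of $\int_\Omega\Hpr_j\dd x$ along $\Dil$ is
\[
 \int_\Omega\operatorname{div}(\Hpr_j\Dil)\dd x=\int_\Omega\bigl(\Dil\Hpr_j+(2n+2)\Hpr_j\bigr)\dd x,
\]
not $(2n+2)\int_\Omega\Hpr_j\dd x$. The missing term is still at most $C(\mathcal K)V$, because $\|\Hpr_j-2n\|_{C^1(U)}\le1$ and $\Dil$ is bounded on $\mathcal K$.

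\emph{Second slip.} The inequality $\eps\partial_\eps P_\eps\le P_\eps-P_0$ is false. Write $\Phi_\eps=\sqrt{\cos^2u+\eps^2\sin^2u}$ for the $P_\eps$ density. Pointwise,
\[
 \frac{\eps^2\sin^2u}{\Phi_\eps}=(\Phi_\eps-\cos u)\Bigl(1+\frac{\cos u}{\Phi_\eps}\Bigr),
\]
so in fact $\Delta P_\eps\le\eps\partial_\eps P_\eps\le2\Delta P_\eps$, with strict first inequality wherever $\sin u\cos u\ne0$. The bound you actually need, $\eps\partial_\eps P_\eps\le P_\eps$, still holds directly from $\eps^2\sin^2u\le\Phi_\eps^2$. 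That is exactly how the paper argues.
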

\begin{proof}
The identity $\nabla^1_\Sigma u=\Aone e_0-e_n$ following
\eqref{eq:blocks} gives
$|\nabla^1_\Sigma u|\le1+|\Aone|$. From
$\None=-(\cos u)e_{2n}+(\sin u)T$, the cutoff terms in $X_F$ give
\[
 \vT=\sin u,\qquad
 \vF=\chi(\cos u)(F/\alpha)\sin u
       +(1-\chi(\cos u))F\cos u
     =\cos u+u\sin u.
\]
The quotient occurs only where $\alpha\ne0$. Differentiating these
speeds on $\Sigma^*$ yields
\[
 \nabla^1_\Sigma\vT=(\cos u)\nabla^1_\Sigma u,\qquad
 \nabla^1_\Sigma\vF=(u\cos u)\nabla^1_\Sigma u.
\]
Thus $|\vT|\le1$, $|\vF|\le1+\pi/2$, and both gradients are at most
$(\pi/2)(1+|\Aone|)$. For $v^1_{\Dil}=\langle\Dil,\None\rangle$ and
every $g_1$-unit tangent vector $E$,
\[
 |v^1_{\Dil}|\le|\Dil|,\qquad
 E(v^1_{\Dil})=\langle\nabla^1_E\Dil,\None\rangle
                   +\langle\Dil,\Aone E\rangle.
\]
Since $\Dil$ and $\nabla^1\Dil$ are bounded on $\mathcal K$,
\[
 |\nabla^1_\Sigma v^1_{\Dil}|
 \le\|\nabla^1\Dil\|_{L^\infty(\mathcal K)}
    +\|\Dil\|_{L^\infty(\mathcal K)}|\Aone|
 \le C_j(1+|\Aone|).
\]
All three speeds are bounded and $C^1$ on $\Sigma^*$, hence admissible.
Using $d\sigma_H=(\cos u)d\sigma_1$ and \textup{(H1)--(H2)},
\[
 \int|F|\dd\sigma_H=\int|\vF|\dd\sigma_1
 \le(1+\pi/2)\sigma_1(\Sigma)<\infty.
\]
As $I_{\Dil}=(2n+2)V>0$, $\kappa$ is finite, so
$\langle\Xvp,\None\rangle=\vF-\kappa v^1_{\Dil}$ is admissible.
Its volume flux is $I_F-\kappa I_{\Dil}=0$; thus
\cref{sel:prop:physical-form} gives constrained stability.

For $I_F$, the product rule gives
\[
 \divh(ue_n)=e_nu+u\divh e_n
 =(\mathsf E_{u,n}-1)+2nu\alpha=\mathsf E_{u,n}+2n F-(2n+1).
\]
Testing with $\eta_\ell^2$ gives
\[
 \int\eta_\ell^2(\mathsf E_{u,n}+2n F-(2n+1))\dd\sigma_H
 =-\int u\langle e_n,\grad(\eta_\ell^2)\rangle\dd\sigma_H.
\]
Because $|u|\le\pi/2$, $|e_n|=1$,
$d\sigma_H=(\cos u)d\sigma_1\le d\sigma_1$, and
$|\grad\psi|\le|\nabla^1_\Sigma\psi|$, the flux is at most
$\pi\int|\nabla^1_\Sigma\eta_\ell|\dd\sigma_1\to0$ by
$0\le\eta_\ell\le1$ and \eqref{eq:cap-est}.
The remaining density $\mathsf E_{u,n}\,d\sigma_H$
is integrable by \eqref{eq:basic-curvature-domination} and
\eqref{sel:tag-3-4}, proving the first identity in \eqref{eq:global-support}.

Hence
$|2n I_F|\le(2n+1)P_0(\Omega)+\sqrt{P_0(\Omega)\En}$ by Cauchy--Schwarz
and \eqref{eq:defect-square}, giving the bound for $I_F^2$.
Set $\Lambda_\eps=\eps\partial_\eps P_\eps$ with the set fixed.
Its density is $\eps^2\sin^2u/\Phi_\eps$ relative to $d\sigma_1$,
so $0\le\Lambda_\eps\le P_\eps$. Stationarity under dilation gives
\begin{equation}\label{eq:stationary-size-bound}
 2nP_\eps\le(2n+1)P_\eps-\Lambda_\eps
 =\int_\Omega(\Dil\Hpr_j+(2n+2)\Hpr_j)\dd x\le CV.
\end{equation}
Here the scaling law
$P_\eps(\Dil_{e^s}\Omega)=e^{(2n+1)s}P_{\eps e^{-s}}(\Omega)$
and the compactly supported extension of $\Dil$ justify the first
variation directly for finite-perimeter sets. Thus
$P_0(\Omega)/V\le C$. Since $\kappa=I_F/((2n+2)V)$,
this ratio bound and $\sqrt{P_0(\Omega)\En}\le(P_0(\Omega)+\En)/2$
give the remaining estimates in \eqref{eq:global-support}.

Finally $F(\alpha )\ge(\pi/2)|\alpha |$: for $\alpha\ne0$ their difference is
$1-|\alpha |\arctan(1/|\alpha |)\ge0$ by $\arctan t\le t$, so
$\int|\alpha |\dd\sigma_H\le2I_F/\pi$.
\end{proof}

\subsection{The Riemannian Jacobi contraction}

The cancellation
$F^2-\tfrac{\pi^2}{4}\alpha^2-\alpha K=\cos^2u$
forces the contracted squared $g_1$-normal speed to be $\cos^4u$;
differentiating the resulting profiles gives the angular coefficient
$-3\cos^4u$ in \eqref{eq:coeff}. Thus set
\[
 K(\alpha)=\frac{F^2-\tfrac{\pi^2}{4}\alpha^2-{\cos^{2} u}}{\alpha},\quad K(0)=0.
\]
The numerator is smooth, even, and zero at $\alpha=0$; hence it is
$\alpha^2$ times a smooth function and $K$ extends smoothly.

Use $X_K=\chi(\cos u)(K/\alpha)T-(1-\chi(\cos u))Ke_{2n}$
on $\Sigma^*$, with the same cutoff $\chi$. On the caps, with $s=\sin u$,
\[
 \begin{aligned}
 \frac K\alpha
 &=(F/\alpha)^2-\tfrac{\pi^2}{4}-\frac{\cos^4u}{\sin^2u}
   =u^2-\tfrac{\pi^2}{4}+2u\frac{\cos u}{\sin u}+\cos^2u,\\
 \frac{d}{ds}\left(\frac K\alpha\right)
 &=\frac{2\cos u(s\cos u-u)}{s^2}\longrightarrow0\qquad(s\to\pm1).
 \end{aligned}
\]
The endpoint values are zero, so $K/\alpha$ also extends in $C^1$ as a
function of $N^1_T$, giving a local $C^1$ ambient field $X_K$.
Since $|s|\ge\sqrt5/3$ on the caps, integration of the derivative gives
\[
 |K/\alpha|\le C\int_{|s|}^1\sqrt{1-t^2}\dd t
       \le C(1-s^2)^{3/2},\qquad |K|\le C\cos^2u.
\]
The horizontal speeds are $\alpha,F,K$. Substituting
$\alpha=\tan u$ into $\cos u\,K$, and extending through $u=0$,
gives the $g_1$-normal speeds
\begin{equation}\label{eq:profiles}
 \vT={\sin u},\qquad \vF={\cos u}+u{\sin u},\qquad
 \vK=2u{\cos u}+(u^2-\tfrac{\pi^2}{4}+{\cos^{2} u}){\sin u}.
\end{equation}
Differentiating with respect to $u$ gives
\[
 \partial_u \vT={\cos u},\qquad \partial_u \vF=u{\cos u},\qquad
 \partial_u \vK={\cos u}(u^2-\tfrac{\pi^2}{4}+3\cos^2u).
\]
Since $|u|<\pi/2$ and $\nabla^1_\Sigma u=\Aone e_0-e_n$,
\[
 |\vK|\le\pi+\tfrac{\pi^2}{4}+1,\qquad
 |\nabla^1_\Sigma\vK|
 \le(\tfrac{\pi^2}{4}+3)(1+|\Aone|).
\]
Thus $\vK$ is a bounded $C^1$ admissible speed, as are
$\vT,\vF$ by Lemma~\ref{lem:global-domain};
\cref{sel:prop:physical-form} applies to all three.

For any symmetric bilinear form $L$ on normal speeds, set
\[
 [L]_*:=L(\vF,\vF)-\tfrac{\pi^2}{4}L(\vT,\vT)-L(\vT,\vK).
\]
For horizontal speeds, replace $(\vF,\vT,\vK)$ by
$(F,\alpha,K)$; for fields, replace them by $(X_F,T,X_K)$.
The notation $[L]_{*,\eta}$ applies $\eta$ to every argument.

For scalar triples $f=(f_T,f_F,f_K)$ and $g=(g_T,g_F,g_K)$,
the product bracket means the symmetric contraction
\[
 [fg]_*:=f_Fg_F-\tfrac{\pi^2}{4}f_Tg_T
             -\tfrac12(f_Tg_K+f_Kg_T).
\]
The defining relation for $K$ and the derivatives of \eqref{eq:profiles} give
\begin{equation}\label{eq:coeff}
 \begin{aligned}
 [v^1v^1]_*&=\cos^2u\bigl(F^2-\tfrac{\pi^2}{4}\alpha^2-\alpha K\bigr)
                   =\cos^4u,\\
 [\partial_uv^1\,\partial_uv^1]_*
   &=\cos^2u\bigl(u^2-\tfrac{\pi^2}{4}
                -(u^2-\tfrac{\pi^2}{4}+3\cos^2u)\bigr)
     =-3\cos^4u,\\
 [v^1\partial_uv^1]_*
   &=\tfrac12\partial_u[v^1v^1]_*
     =-2\sin u\cos^3u.
 \end{aligned}
\end{equation}
For $i\in\{T,X_F,X_K\}$, the speeds satisfy
$v_i^H=v_i^1/\cos u$, and hence
$\partial_uv_i^H=(\partial_uv_i^1)/\cos u+\sin u\,v_i^1/\cos^2u$.
Therefore
\[
 \begin{aligned}
 [v^Hv^H]_*&=(\cos u)^{-2}[v^1v^1]_*=\cos^2u,\\
 [\partial_uv^H\,\partial_uv^H]_*
   &=-3\cos^2u-4\sin^2u+\sin^2u=-3,\\
 [v^H\partial_uv^H]_*
   &=-2\sin u\cos u+\sin u\cos u=-\sin u\cos u.
 \end{aligned}
\]
Contracting the local integrand of $\Qhor$ gives the density
\begin{equation}\label{eq:combined}
 j^H:=-(3|\grad u|^2+q_H {\cos^{2} u}).
\end{equation}

By the centre identity \eqref{eq:local-noncmc-balance}, this density is
$-\mathfrak e-|\grad u|^2-2n$ plus the variable-curvature and
divergence terms. The zero-flux field $\Xvp$ makes the corresponding
elliptic Hessian accessible to constrained stability.
\Needspace{18\baselineskip}
\section{The centre identity and quantitative rigidity}
\label{app:non-cmc}\label{sec:proof-main}

Theorem~\ref{thm:absorption} bounds the sum of the squared
$L^2(d\sigma_{H,j})$ norms of $D_H\Ca_j$,
$\cos u_j\,\mathsf E_{A,j}$ and $\mathsf E_{u,j}$ by $C\delta_j|\Omega_j|$.

\begin{theorem}[Quantitative centre rigidity]\label{thm:absorption}
\ResultBookmark{2}{thm:absorption}{Theorem}%
Let $n\ge2$ and let $(\Omega_j,\eps_j,\Hpr_j)$ satisfy
\textup{(H1)--(H2)} of Section~\ref{sec:stable-families}.
For each $j$, suppose the first variation at $\Omega_j$ of
\[
 \Omega\longmapsto P_{\eps_j}(\Omega)-\int_\Omega \Hpr_j\dd x
\]
vanishes under compactly supported smooth ambient variations in $U$.
Suppose also that its Hessian is nonnegative on $\gTheta$-normal
speeds $v\in C_c^2(\Sigma_j^*)$ satisfying
$\int_{\Sigma_j}v\dd\sigma_{1,j}=0$.
There are constants $C,\delta_*>0$, depending only on
$n,\mathcal K,U$, such that whenever
$\eps_j^2+\|\Hpr_j-2n\|_{C^2(U)}\le \delta_*$,
\begin{equation}\label{eq:full-centre-decay}
 \int_{\Sigma_j}\bigl(
 |\mathsf E_{C,j}|^2+|{\cos u_j}\mathsf E_{A,j}|^2+|\mathsf E_{u,j}|^2
 \bigr)\dd\sigma_{H,j}+\Delta P_j
 \le C\bigl(\eps_j^2+\|\Hpr_j-2n\|_{C^2(U)}\bigr)|\Omega_j|.
\end{equation}
\end{theorem}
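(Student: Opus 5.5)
We apply constrained stability to the volume-corrected contraction. Because $\Xvp$ has zero flux, Lemma~\ref{lem:global-domain} gives nonnegative Hessians along $X_F-\kappa\Dil$. Stability yields only one inequality per admissible zero-flux speed, so we combine it as follows. We write
\[
 0\le\BC(\Xvp,\Xvp)=\BC(X_F,X_F)-2\kappa\BC(X_F,\Dil)+\kappa^2\BC(\Dil,\Dil).
\]
The remaining terms $-\tfrac{\pi^2}{4}\BC(T,T)-\BC(T,X_K)$ of $[\BC]_*$ are handled differently. Central translation $T$ is a symmetry of $P_\eps$, so $\BC(T,\cdot)$ reduces, via stationarity, to the prescribed-curvature residual $\int (T\Hpr_j)\,v\,d\sigma$. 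That residual is bounded by $C\|\Hpr_j-2n\|_{C^1}(|\Omega|+\En)$ through Cauchy--Schwarz and \eqref{eq:bound}. Hence
\[
 0\le[\BC]_*+2|\kappa||\BC(X_F,\Dil)|+\kappa^2|\BC(\Dil,\Dil)|+C\|\Hpr_j-2n\|_{C^1}(V+\En).
\]
Dilation is almost a symmetry: by the scaling law, $\BC(X,\Dil)$ equals a first variation of $\Lambda_\eps$ plus $\Hpr$-residuals. Its size is therefore $O(\eps^2)$ on the non-cap region plus the cap term bounded by $\tfrac83\Delta P_\eps$. Combined with $\kappa^2V+|\kappa|I_F\le C(V+\En)$ from \eqref{eq:global-support}, this controls the dilation corrections by $C(\eps^2+\|\Hpr_j-2n\|_{C^2})(V+\En)+C\Delta P_\eps$. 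I expect to need the finer structure $\Delta P_\eps\le C\eps^2\int_{\text{caps}}$ here.

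The second step evaluates $[\BC]_{*}$ locally under common cutoffs $\eta_\ell$, using \eqref{sel:tag-3-6} and \eqref{eq:cap-est} to discard flux terms. We split the $P_\eps$ Jacobi density into a horizontal part and an $\eps$-correction. By \eqref{eq:coeff}, the horizontal part contracts to $j^H\,d\sigma_H$ in \eqref{eq:combined}. The centre identity (\cite[Theorem~4.1]{Rigidity}, with the variable-curvature linearization of \cite[Lemma~2.8(i)]{JacobiSpectrum}) rewrites this as $-\mathfrak e-|\grad u|^2-2n$, plus transport terms of size $C\|\Hpr_j-2n\|_{C^2}(1+\mathfrak e)$, plus a divergence that integrates to zero. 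Using $2nI_F=(2n+1)P_0-\beta$, the constant $-2n$ combines with $|\grad u|^2=|\AngleError|^2-2\mathsf E_{u,n}+1$ so that the linear $\beta$ terms cancel. What remains is a negative multiple of $\En$ plus $O(\delta)(V+\En)$. The $\eps$-correction arises from replacing $d\sigma_H$ by $\sqrt{\cos^2u+\eps^2\sin^2u}\,d\sigma_1$ and the anisotropic second-order terms. Using $[\partial_uv^1\partial_uv^1]_*=-3\cos^4u\le0$, it leaves a nonpositive meridional term $-c\int(\Amer)^2\eps^2\sin^2u(\dots)$ plus errors $C\eps^2(V+\En)$ and a negative multiple of the cap excess. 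This contributes $-c\Delta P_\eps$, matching the constant $\tfrac83$.

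Combining the two steps gives $c(\En+\Delta P_\eps)\le C\delta_j(V+\En)$. For $\delta_j\le\delta_*$ we absorb $C\delta_j\En$ and obtain $\En+\Delta P_\eps\le C\delta_jV$. Then \eqref{eq:centre-derivative-bound} and \eqref{eq:defect-square} bound $|\CentreError|^2$, $|\cos u\,\ShapeError|^2$ and $|\AngleError|^2$ by $C\mathfrak e$, which yields \eqref{eq:full-centre-decay}. The main obstacle is the $\eps$-correction of the contraction. There the elliptic Jacobi operator has an extra $\eps^2$-weighted meridional term, and its $\Amer$ contributions are not a priori bounded by $\En$. My first attempt would be to complete the square in $\Amer$ with the negative coefficient $-3\cos^4u$, so that these contributions become a sign-definite square. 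I would then verify that the leftover $\eps^2\sin^2u$ weights are dominated by $\Lambda_\eps$ and hence by $P_\eps\le CV$.
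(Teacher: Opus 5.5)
\textbf{Gap in the first step: dilation is not an approximate symmetry.} Your first step does not hold. Perimeter and volume scale with exponents $2n+1$ and $2n+2$, so dilation is not an approximate symmetry of $P_\eps-\int_\Omega\Hpr_j\dd x$. Differentiating the scaling law and using stationarity gives
\[
 \BC(\Dil,Y)=-2n\,\delta V[Y]+\mathcal R_{\Dil}(Y),\qquad
 \mathcal R_{\Dil}(Y)=-\delta_Y\Lambda_\eps+\int(2n-\Hpr_j-\Dil\Hpr_j)v_Y^H\dd\sigma_H .
\]
Hence $\BC(X_F,\Dil)$ and $\BC(\Dil,\Dil)$ have leading parts $-2nI_F$ and $-2nI_{\Dil}$, which are of order $V$. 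Only the remainders $\mathcal R_{\Dil}$ are small or sign-controlled.

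As a check, take the limiting model: $\eps=0$, $\Hpr\equiv2n$, unit Pansu sphere. There $X_F$ and $\Dil$ have the same normal speed. So $\kappa=1$, $\BC(\Dil,\Dil)=-2nI_{\Dil}$, and $[\BC]_*=-\int|\grad u|^2\dd\sigma_H-2nP_0(\Omega)=-(2n+1)P_0(\Omega)$. Your claimed smallness of the dilation terms would then force $0\le-(2n+1)P_0(\Omega)$.

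The signs must be kept. With $\kappa=I_F/I_{\Dil}$,
\[
 -2\kappa\BC(X_F,\Dil)+\kappa^2\BC(\Dil,\Dil)
 =-2\kappa\mathcal R_{\Dil}(X_F)+\kappa^2\mathcal R_{\Dil}(\Dil)+2nI_F^2/I_{\Dil}.
\]
The order-$V$ term $2nI_F^2/I_{\Dil}$ is essential. Rewrite it with $2nI_F=(2n+1)P_0(\Omega)-\beta$ and $2nI_{\Dil}=(2n+1)P_0(\Omega)+\mathsf D$. It then cancels $-(2n+1)P_0(\Omega)+2\beta$, which comes from $-2nP_0(\Omega)-\int|\grad u|^2\dd\sigma_H$. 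What remains is $\beta^2/((2n+1)P_0(\Omega))$, and Cauchy--Schwarz absorbs this into $\int|\AngleError|^2\dd\sigma_H$. Your second step appeals to exactly this cancellation, but by then you have already discarded the $I_F^2$ term it needs.

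\textbf{Source of the $\Delta P_\eps$ control.} The negative $\Delta P_\eps$ does not come from the $\eps$-correction $j_\eps$. Its first summand is only bounded above by $C\eps^2(1+|\Aone_{HH}|^2+|\AngleError|^2)$. The gain comes from the dilation terms instead. By stationarity and the twice-differentiated scaling law, $\kappa^2\mathcal R_{\Dil}(\Dil)-2n\mathsf D\,I_F^2/(I_{\Dil}(2n+1)P_0(\Omega))$ contains $-(2n+1)\kappa^2\Delta P_\eps$. The $\mathsf D$-term records that the dilation balance at an $\eps$-stationary set involves $P_\eps$, not $P_0$.

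The cap estimate also runs the other way from what you anticipate. The paper uses $\int_{\{\cos u\le1/2\}}\eps^2\cos u\,\Phi_\eps^{-3}\dd\sigma_H\le\tfrac83\Delta P_\eps$. This turns the cap part of $-2\kappa\mathcal R_{\Dil}(X_F)$ into a cost $\tfrac83\kappa^2\Delta P_\eps$, not a gain. Closing then needs $2n+1>\tfrac83$ and the lower bound $\kappa\ge2n/((2n+2)R)$, with $R=1+\sup_{\mathcal K}|z|$, obtained from $F\ge1$ and horizontal Gauss--Green. Your outline has neither.

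\textbf{The transport terms are not small.} The terms $-(\Hhor-2n)a_\omega+\cos^2u\sin u\,e_0\Hhor$ are not $O(\|\Hpr_j-2n\|_{C^2})$. Stationarity gives $\Heps=\Hpr_j$, not $\Hhor=\Hpr_j$. By \eqref{eq:He}, $\mathsf E_H=(1-\mu_\eps)\Hhor-\eps^2\Phi_\eps^{-3}\Amer$, and $\eps^2\Phi_\eps^{-3}$ is of order $\eps^{-1}$ where $\cos u\le\eps$.

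The paper handles this in four moves:
\begin{enumerate}
\item It removes $\mathsf E_Hv^H_{X_\omega}$ with the central first-variation lemma.
\item It integrates $e_0\mathsf E_H$ by parts.
\item It substitutes $\Amer$ through $\Heps=\Hpr_j$.
\item It completes the square in $s=\mathsf E_H$, using half of $m_\eps$; the quadratic coefficient is at most $-\cos^2u\,\Phi_\eps^2/\eps^2$.
\end{enumerate}
Of the remaining $\tfrac12\mathcal M_\eps$, an amount $\tfrac38\mathcal M_\eps$ pays for the $\eps^2\Amer$ terms of $\mathcal R_{\Dil}(X_F)$.

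Finally, dominating leftover weights by $\Lambda_\eps\le P_\eps\le CV$, as you propose, produces no factor $\delta_j$.
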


For fixed $j$, suppress its index and set
\[
 \begin{aligned}
 \mathcal G_j&:=\int_{\Sigma_j}\bigl(|\mathsf E_{C,j}|^2
       +|\cos u_j\,\mathsf E_{A,j}|^2+|\mathsf E_{u,j}|^2\bigr)
       \dd\sigma_{H,j},\\
 \delta_j&:=\eps^2+\|\Hpr_j-2n\|_{C^2(U)},\\
 \mathcal R_{\Dil}(Y)&:=\BC(\Dil,Y)+2n\delta V[Y],\qquad
 \mathsf D:=2nI_{\Dil}-(2n+1)P_0(\Omega).
 \end{aligned}
\]
Let $\mathcal R$ be given by \eqref{eq:Rexpand}, with combined density
$\mathfrak r_\eps$ from \eqref{eq:joint-curvature-density}.
\[
 \begin{gathered}
 \mathcal R_1:=\int\mathfrak r_\eps\dd\sigma_H,\qquad
 \mathcal R_2:=-2\kappa\mathcal R_{\Dil}(X_F),\qquad
 \mathcal R_3:=\mathcal R-\mathcal R_1-\mathcal R_2,\\
 \mathcal M_\eps:=\int_{\Sigma^*}
       4\eps^2(1+\eps^2\alpha^2)^{-3/2}|\Amer|^2\dd\sigma_H.
 \end{gathered}
\]
\[
 \begin{aligned}
 \mathcal R_1&\le-\tfrac12\mathcal M_\eps
       +C(\eps^2+\|\Hpr_j-2n\|_{C^1(U)})(V+\En),\\
 \mathcal R_2&\le\tfrac38\mathcal M_\eps+\tfrac83\kappa^2\Delta P_\eps
       +C(\eps^2+\|\Hpr_j-2n\|_{C^1(U)})(V+\En),\\
 \mathcal R_3&\le-(2n+1)\kappa^2\Delta P_\eps
       +C\|\Hpr_j-2n\|_{C^2(U)}(V+\En).
 \end{aligned}
\]
Since $2n+1>8/3$, their sum and the master inequality give
\[
 \En+\tfrac18\mathcal M_\eps
       +(2n+1-\tfrac83)\kappa^2\Delta P_\eps
 \le C_0\delta_j(V+\En).
\]
The flux bounds in Lemma~\ref{lem:global-domain} and the positive
lower bound for $\kappa$ proved below yield
$\mathcal G_j+\Delta P_j\le C\delta_j|\Omega_j|$ after absorption.

\Needspace{6\baselineskip}
For $P_0-2nV$, \cite[Theorem~4.1]{Rigidity} identifies the negative
volume-constrained Hessian with
$\En+\int|\AngleError|^2\dd\sigma_H
-\beta^2/((2n+1)P_0(\Omega))$. For $P_\eps$, stationarity gives
$\Heps=\Hpr_j$, while $\Hhor$ need not equal $2n$. The resulting
curvature and divergence terms are combined with the Riemannian
Jacobi density under common cutoffs across the characteristic set.

The integral in \eqref{eq:full-centre-decay} is taken over the
noncharacteristic part of $\Sigma_j$. The factor $\cos u_j$ weights
the shape defect near characteristic points; the estimate gives no
unweighted $L^2$ bound for that defect.

\subsection{The centre identity with variable curvature}

The local centre identity expresses $\mathfrak e$ in terms of
$q_H$, $\Hhor-2n$ and a horizontal divergence.
The variable-curvature and divergence terms enter the global pairing
\eqref{eq:stationary-contraction}. The divergence is interpreted
weakly on $\Sigma^*$, where the angle and horizontal shape operator
are defined.

Since $\Hhor-2n=\operatorname{tr}\ShapeError$, set
$\omega=\partial_\alpha F=u+{\sin u}{\cos u}$ and
\begin{equation}\label{eq:ach}
 a_\omega=2{\cos^{2} u}-e_n\omega-(2n-2)\alpha\omega
       =2{\cos^{2} u}(2-\mathsf E_{u,n})-2(n-1)\alpha\omega.
\end{equation}
\Needspace{9\baselineskip}
\begin{proposition}[Local centre identity]
\label{prop:noncmc-local-centre}
\ResultBookmark{3}{prop:noncmc-local-centre}{Proposition}%
Let $\Sigma\subset\Hn$, $n\ge2$, be an oriented two-sided $C^2$
hypersurface. With the notation above, the following identity holds
weakly on $\Sigma^*$, including at $\alpha=0$:
\begin{equation}\label{eq:local-noncmc-balance}
 \begin{gathered}
 \mathfrak e=2|\grad u|^2+{\cos^{2} u}q_H-2n
       -(\Hhor-2n)a_\omega+\divh Z_\omega,\\
 Z_\omega=\omega\bigl(\ShapeError-(\Hhor-2n)\Id\bigr)e_n.
 \end{gathered}
\end{equation}
\end{proposition}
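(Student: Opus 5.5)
The plan is to reduce \eqref{eq:local-noncmc-balance} to the constant-curvature centre identity of \cite[Section~4]{Rigidity} and track the extra terms generated when $\Hhor\ne2n$. That identity comes from computing $\divh$ of a vector field built from $\omega\,\ShapeError e_n$, then using the Codazzi-type commutation for $\Shape$ together with $\divh e_n=2n\alpha$. We therefore begin by expanding $\divh Z_\omega$ with the product rule:
\[
 \divh Z_\omega=\langle\grad\omega,(\ShapeError-(\Hhor-2n)\Id)e_n\rangle
 +\omega\,\divh\bigl((\ShapeError-(\Hhor-2n)\Id)e_n\bigr).
\]
The first term uses $\grad\omega=\partial_\alpha\omega\,\grad\alpha=2\cos^2u\,\grad u$, since $\partial_u\omega=2\cos^2u$ and $\grad\alpha=\sec^2u\,\grad u$. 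Substituting $\grad u=\AngleError-e_n$ turns this term into quadratic expressions in $\AngleError$ and $\ShapeError e_n$, plus linear pieces $-2\cos^2u\langle\ShapeError e_n,e_n\rangle+2\cos^2u(\Hhor-2n)(1-\mathsf E_{u,n})$.

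For the second term, $\divh(\Shape e_n)$ must be computed from the horizontal Codazzi equation. This is the contracted identity expressing $\sum_a(\nabla_{E_a}\Shape)E_a-\grad\Hhor$ through $\alpha$, $\grad\alpha$ and $e_n$, as in \cite[Section~2]{CCHY} and \cite[Section~4]{Rigidity}. We add the terms from $\nabla e_n$, namely $\nabla^\xi_Xe_n=-J\nabla^\xi_Xe_{2n}$, expressed through $\Shape$ and $\alpha$. We also add the $2\alpha\ip{\cdot}{e_n}$ weight in $\divh$. Subtracting $\divh(\Amodel e_n)=2\divh e_n=4n\alpha$ and $\divh((\Hhor-2n)e_n)=e_n\Hhor+2n\alpha(\Hhor-2n)$, the $e_n\Hhor$ contribution from Codazzi cancels against the subtracted trace term. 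This cancellation is exactly why $Z_\omega$ carries $-(\Hhor-2n)\Id$. What survives is $|\Shape|^2$, the $\alpha$-terms of $q_H$, and $(\Hhor-2n)$ times zeroth-order coefficients in $\alpha$ and $\mathsf E_{u,n}$.

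We then collect terms. Using the square identity for $|D_Hz_c|^2$ from Section~\ref{sec:centre-energy} and $|\AngleError|^2=|\grad u|^2+2\mathsf E_{u,n}-1$, we check that the quadratic parts match $\mathfrak e-2|\grad u|^2-\cos^2u\,q_H$. We also check that the constant parts give $-2n$ after using $\cos^2u(1+\alpha^2)=1$ and $\omega\alpha=F-1+\sin^2u$. The $(\Hhor-2n)$ coefficients should assemble into $-a_\omega$ in the form $2\cos^2u(2-\mathsf E_{u,n})-2(n-1)\alpha\omega$. At $\Hhor\equiv2n$ this must reduce to the identity of \cite[Theorem~4.1]{Rigidity}, which serves as a consistency check on all non-$(\Hhor-2n)$ terms. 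I expect the main obstacle to be this bookkeeping: getting the torsion and $\alpha$-contributions of the Codazzi equation with correct signs so that the $(\Hhor-2n)$ coefficient is exactly $a_\omega$.

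Finally, we address weak validity. On a $C^2$ surface, $\Shape$ is only $C^0$, so $\divh Z_\omega$ is interpreted distributionally. We verify the identity against $\phi\in C^1_c(\Sigma^*)$ by running the computation above for smooth approximations (or locally for $C^3$ surfaces) and passing to the limit, since every term other than $\divh Z_\omega$ is continuous. At $\alpha=0$ nothing is singular, because $\omega$, $a_\omega$ and $e_n$ are smooth in $u$ on $\Sigma^*$; so the identity holds there too.
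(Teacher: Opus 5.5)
Your architecture is the paper's: the product rule on $Z_\omega=\omega Z$ with $Z=(\ShapeError-(\Hhor-2n)\Id)e_n$, $\grad\omega=2\cos^2u\,\grad u$, a Codazzi contraction for $\divh(\Shape e_n)$, subtraction of the model and trace terms, pointwise algebra with the square identity, and a smooth-approximation limit for the weak statement. Your expansion of the gradient term and your weak-limit argument are fine. The gap is in the central Codazzi step, whose output you misstate.

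You assert that $\divh\bigl((\ShapeError-(\Hhor-2n)\Id)e_n\bigr)$ retains $|\Shape|^2$, the $\alpha$-terms of $q_H$, and $(\Hhor-2n)$ times coefficients involving $\mathsf E_{u,n}$. In fact the contraction is $\divh(\Shape e_n)=e_n\Hhor+2\alpha\Hhor$. After subtracting $4n\alpha$ and $e_n\Hhor+2n\alpha(\Hhor-2n)$,
\[
 \divh\bigl((\ShapeError-(\Hhor-2n)\Id)e_n\bigr)=-(2n-2)\alpha(\Hhor-2n).
\]
This has no quadratic, no $q_H$ and no $\mathsf E_{u,n}$ terms. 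The piece you expect to generate $|\Shape|^2$ is $\sum_a\ip{\nabla^\xi_{E_a}e_n}{\Shape E_a}$. From $e_n=-Je_{2n}$ and the definition of $\Shape$ one gets $\nabla^\xi_Xe_n=J\Shape X+\alpha(X-\ip{X}{e_n}e_n)$. Since $\ip{J\Shape E_a}{\Shape E_a}=0$ by skew-symmetry of $J$, that piece is only $\alpha(\Hhor-\ip{\Shape e_n}{e_n})$. This error is not cosmetic. Everything surviving in that divergence is multiplied by $\omega$, and \eqref{eq:local-noncmc-balance} contains no $\omega|\Shape|^2$ or $\omega q_H$ terms. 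So the matching you describe could not close with the output you predict.

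The missing observation is that all quadratic and constant terms cancel pointwise, with no divergence involved. Expand $\cos^2u\,q_H$ using
\[
 \Shape=\Amodel+\ShapeError,\qquad
 \ip{\Amodel}{\ShapeError}=\Hhor-2n+\ip{\ShapeError e_n}{e_n},\qquad
 \cos^2u\,e_n\alpha=\mathsf E_{u,n}-1,
\]
together with $(2n+2)\cos^2u+2(n+1)\sin^2u=2n+2$. Comparing with the square identity gives
\[
 \mathfrak e-\bigl(2|\grad u|^2+\cos^2u\,q_H-2n\bigr)
 =2\cos^2u\bigl(\ip{\grad u}{\ShapeError e_n}-(\Hhor-2n)\bigr).
\]
The gradient term of the product rule supplies exactly $2\cos^2u\ip{\grad u}{\ShapeError e_n}-(\Hhor-2n)e_n\omega$. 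The term $\omega\divh Z$ supplies $-(2n-2)\alpha\omega(\Hhor-2n)$. Solve for $2\cos^2u\ip{\grad u}{\ShapeError e_n}$ in terms of $\divh Z_\omega$ and substitute. The three $(\Hhor-2n)$ contributions then combine into $-(\Hhor-2n)a_\omega$ by \eqref{eq:ach}. The identity $\omega\alpha=F-1+\sin^2u$ you planned to use plays no role.
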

\begin{proof}
The smooth Codazzi contraction computed in
\cite[Appendix~A]{Rigidity} gives
$\divh(\Shape e_n)=e_n\Hhor+2\alpha \Hhor$.
The identities $\Amodel e_n=2e_n$ and $\divh e_n=2n\alpha$ give
\[
 \begin{aligned}
 \divh(\ShapeError e_n)
 &=e_n\Hhor+2\alpha \Hhor-2(2n\alpha)
   =e_n\Hhor+2\alpha(\Hhor-2n),\\
 \divh\bigl((\Hhor-2n)e_n\bigr)
 &=e_n\Hhor+2n\alpha(\Hhor-2n),\\
 \divh\bigl((\ShapeError-(\Hhor-2n)\Id)e_n\bigr)
 &=-(2n-2)\alpha(\Hhor-2n).
 \end{aligned}
\]
On each compact noncharacteristic parametrized patch, approximate
the $C^2$ surface by smooth $\Sigma_m$ converging in $C^2$.
Set $Z_m=(\mathsf E_{A,m}-(H_{0,m}-2n)\Id)e_{n,m}$.
For a fixed compactly supported $C^1$ test $\psi$,
\[
 \int\langle Z_m,\nabla_{H,m}\psi\rangle\dd\sigma_{H,m}
 =-\int\psi\,\operatorname{div}_{H,m}Z_m\dd\sigma_{H,m}
 =(2n-2)\int\psi\alpha_m(H_{0,m}-2n)\dd\sigma_{H,m}.
\]
After pullback to the fixed compact patch, the densities and both
integrands converge uniformly, since they depend continuously on the
second jet and the patch is noncharacteristic. Passing to the limit
gives the same identity for
$Z=(\ShapeError-(\Hhor-2n)\Id)e_n$.
Since $\omega=\partial_\alpha F(\alpha)$, the chain rule gives
\[
 \grad\omega=2{\cos^{4} u}\grad\alpha=2{\cos^{2} u}\grad u,\qquad
 e_n\omega=2{\cos^{2} u}\langle \grad u,e_n\rangle=2{\cos^{2} u}(\mathsf E_{u,n}-1).
\]
Testing the weak identity for $Z$ with $\omega\psi$ gives
$\divh(\omega Z)=\langle\grad\omega,Z\rangle+\omega\divh Z$.
Expanding this product yields
\[
 \divh Z_\omega
 =2{\cos^{2} u}\langle \grad u,\ShapeError e_n\rangle
  -(\Hhor-2n)\bigl(e_n\omega+(2n-2)\alpha\omega\bigr).
\]
Expanding $q_H$ using
$\langle \Amodel,\ShapeError\rangle=\Hhor-2n+\langle\ShapeError e_n,e_n\rangle$
and $e_n\alpha=(\cos u)^{-2}(\mathsf E_{u,n}-1)$ gives
\[
 {\cos^{2} u}q_H
 =2(n-1)+|{\cos u}\ShapeError|^2
       +2{\cos^{2} u}\langle\ShapeError e_n,e_n\rangle+4\mathsf E_{u,n}
       +2{\cos^{2} u}(\Hhor-2n).
\]
Together with \eqref{eq:defect-square} and $\grad u=\AngleError-e_n$, this gives
\[
 \mathfrak e-(2|\grad u|^2+{\cos^{2} u}q_H-2n)
 =2{\cos^{2} u}\bigl(\langle \grad u,\ShapeError e_n\rangle-(\Hhor-2n)\bigr).
\]
The product rule and \eqref{eq:ach} give
\eqref{eq:local-noncmc-balance}.
\end{proof}

\subsection{Riemannian Jacobi comparison}\label{sec:jacobi}

The centre identity gives a horizontal Jacobi density, while stability
controls the Hessian of $P_\eps-\int_\Omega\Hpr_j\dd x$. Their
three-profile contractions differ by the elliptic density $j_\eps$
below. Its negative meridional square is retained when the residual
$\mathcal R$ is estimated.

For $v,w\in C_c^2(\Sigma^*)$, define
\[
\begin{aligned}
 \Qonehor(v,w)&=D^2P_0^\perp(v,w)
       -\int\Hhor\Hone vw\dd\sigma_1,\\
 \Qoneeps(v,w)&=D^2P_\eps^\perp(v,w)
       -\int\Heps\Hone vw\dd\sigma_1.
\end{aligned}
\]
These Hessians use the $g_1$-normal graph
$x\mapsto\exp_x^{g_1}((sv+rw)\None)$.
At $s=r=0$, the normal speed in $r$ is $w$ with zero $s$-derivative,
and $\partial_s\dd\sigma_1=\Hone v\dd\sigma_1$. Thus
\[
 \Qonehor(v,w)=\int w\dot\Hhor[v\None]\dd\sigma_1,
 \qquad
 \Qoneeps(v,w)=\int w\dot\Heps[v\None]\dd\sigma_1.
\]

The Hessian of $\int_\Omega\Hpr_j\dd x$ is
$\int(\partial_{\None}\Hpr_j+\Hpr_j\Hone)vw\dd\sigma_1$.
Subtracting it and using stationarity, $\Heps=\Hpr_j$, gives
\[
 \BC(v\None,w\None)=\Qoneeps(v,w)
              -\int(\partial_{\None}\Hpr_j)vw\dd\sigma_1.
\]
On each compact subset of $\Sigma^*$, these Hessians have bounded
coefficients and involve at most first derivatives of $v,w$;
they therefore extend continuously to $C_c^1(\Sigma^*)$.
Put
\begin{equation}\label{eq:anisotropy}
 \Phi_\eps=\sqrt{{\cos^{2} u}+\eps^2{\sin^{2} u}},\qquad
 \mu_\eps={\cos u}/\Phi_\eps=(1+\eps^2\alpha^2)^{-1/2}.
\end{equation}
The $P_\eps$ density relative to $d\sigma_1$ is $\Phi_\eps$, and
$0<\mu_\eps\le1$ on $\Sigma^*$.
The identities $T+\alpha e_{2n}=e_0/{\cos u}$,
$(T+\alpha e_{2n})\alpha=\Amer/{\cos^3u}$, and
$\Hone={\cos u}\Hhor+\Amer$ reduce
\cite[Corollary~3.5]{Verzellesi2026} to
\begin{equation}\label{eq:He}
 \begin{aligned}
 \Heps&=\mu_\eps \Hhor+\frac{\eps^2}{\Phi_\eps^3}\Amer,\\
 \Heps-\Hhor
 &=\frac{\mu_\eps-1}{\cos u}(\Hone-\Amer)+\frac{\eps^2}{\Phi_\eps^3}\Amer.
 \end{aligned}
\end{equation}
Direct differentiation gives
$\partial_u\mu_\eps=-\eps^2\sin u/\Phi_\eps^3$.

On a compact noncharacteristic patch, let $j_\eps$ be the
$d\sigma_H$-density obtained by applying $[\cdot]_*$ to the local
integrand of $\Qoneeps-\Qonehor$ on $(\vT,\vF,\vK)$.

\begin{proposition}[Jacobi comparison for the three profiles]
\label{prop:common}
\ResultBookmark{3}{prop:common}{Proposition}%
The density $j_\eps$ has the pointwise formula
\begin{align}
 j_\eps={}&
 {\cos^{2} u}(1-\mu_\eps)
 (|\Aone_{HH}|^2+4|\AngleError |^2-4 \langle\AngleError,e_n\rangle+2(n+1){\sin^{2} u})\notag\\
 &-\eps^2\mu_\eps^3(|\grad u|^2+4|\Amer|^2).\label{eq:star}
\end{align}
\end{proposition}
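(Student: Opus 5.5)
The plan is to compute the local integrand of $\Qoneeps-\Qonehor$ as a linearization of curvature, $\int w\,(\dot\Heps-\dot\Hhor)[v\None]\dd\sigma_1$, and then contract it over the three profiles. Differentiate the first identity in \eqref{eq:He} along the $g_1$-normal variation $v\None$:
\[
 \dot\Heps[v\None]=\mu_\eps\dot\Hhor[v\None]
 +(\partial_u\mu_\eps)\Hhor\,\dot u
 +\Bigl(\frac{\eps^2}{\Phi_\eps^3}\Bigr)'\Amer\,\dot u
 +\frac{\eps^2}{\Phi_\eps^3}\dot\Amer .
\]
Hence $\dot\Heps-\dot\Hhor=(\mu_\eps-1)\dot\Hhor+(\text{terms in }\dot u,\dot\Amer)$. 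For a $g_1$-normal graph the standard formulas are $\dot u\,\cos u=\langle\dot\None,e_0\rangle$-type expressions, namely $\dot u=-\langle\nabla^1_\Sigma v,e_0\rangle/\!\cos u$ up to sign conventions, and $\dot\Amer=-e_0e_0v-(\text{curvature})v$.

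For $\dot\Hhor$ I use the horizontal linearization from \cite[Lemma~2.8(i)]{JacobiSpectrum}, written in Riemannian speed $v=\cos u\,v^H$. That gives $\int w\dot\Hhor\dd\sigma_1=\int\{\langle\grad v^H,\grad w^H\rangle-q_Hv^Hw^H\}\dd\sigma_H$ plus transport terms.

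The first step is the $(1-\mu_\eps)$ part. Here I do not contract $\dot\Hhor$ directly. I split $(\mu_\eps-1)\dot\Hhor$ and use the block form \eqref{eq:blocks} to rewrite the horizontal Jacobi operator in terms of the Riemannian one: $|\Aone|^2=|\Aone_{HH}|^2+2|\AngleError|^2+|\Amer|^2$, plus the ambient Ricci term $\Ric^1(\None,\None)$ of $g_1$, which contributes the $2(n+1)\sin^2u$-type coefficients. I then apply the contraction rules \eqref{eq:coeff}, i.e. $[v^1v^1]_*=\cos^4u$, $[\partial_uv^1\partial_uv^1]_*=-3\cos^4u$ and $[v^1\partial_uv^1]_*=-2\sin u\cos^3u$. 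Gradient terms of the profiles are $\partial_uv_i\,\nabla^1_\Sigma u$, with $\nabla^1_\Sigma u=\AngleError-e_n+\Amer e_0$. The zeroth-order coefficient $\cos^4u$ times the potential, after dividing by $\cos u$ to pass to $d\sigma_H$, should produce $\cos^2u(1-\mu_\eps)(|\Aone_{HH}|^2+\dots)$. The cross terms $-4\langle\AngleError,e_n\rangle$ and $4|\AngleError|^2$ should come from $|\nabla^1_\Sigma u|^2$ and $\langle\nabla^1_\Sigma u,e_n\rangle$ combined with the $-3\cos^4u$ and $-2\sin u\cos^3u$ coefficients. The horizontal-gradient parts, meaning those with $\grad u$ (not $e_0$) components, must cancel between the two $\mu$-weighted Jacobi forms; I check this by matching the $[\,]_*$ contractions of $\Qhor$, which give \eqref{eq:combined}.

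The second step is the genuinely elliptic $\eps^2$ part. It collects the derivative $\partial_u\mu_\eps=-\eps^2\sin u/\Phi_\eps^3$, the term $\eps^2\Phi_\eps^{-3}\dot\Amer$, and its $u$-derivative. Contracting $\eps^2\Phi_\eps^{-3}\,e_0v\cdot e_0w$ with $[\partial_uv^1\partial_uv^1]_*=-3\cos^4u$ and $e_0u=\Amer$ gives $-3\eps^2\Phi_\eps^{-3}\cos^4u|\Amer|^2$. The mixed terms $[v^1\partial_uv^1]_*$ and the zeroth-order $\Amer^2$ potential must supply the rest of $-4\eps^2\mu_\eps^3|\Amer|^2$, using $\mu_\eps^3=\cos^3u/\Phi_\eps^3$ and the conversion $d\sigma_1=d\sigma_H/\cos u$. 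The $-\eps^2\mu_\eps^3|\grad u|^2$ term should arise from $\partial_u\mu_\eps\,\Hhor\,\dot u$ after expressing $\Hhor$ via $\Hone-\Amer$ and using the horizontal components of $\nabla^1_\Sigma u$.

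The main obstacle is bookkeeping: showing that every first-order cross term between $\Amer$ and the horizontal gradients is an exact total contraction that vanishes. These are terms of the form $\sin u\cos^3u\,\Amer\langle\cdot\rangle$. I expect them to vanish by the relation $[v^1\partial_uv^1]_*=\frac12\partial_u[v^1v^1]_*$, which converts such terms into $e_0$-derivatives of $\cos^4u$ times coefficients; these combine with the $\partial_u$ of the $\eps^2\Phi_\eps^{-3}$ coefficient. My first attempt will be to verify this on a frame $(E_a,e_0)$ by collecting the coefficients of $|\Amer|^2$, $\Amer\langle\AngleError,\cdot\rangle$ and $|\grad u|^2$ separately. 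Everything is pointwise on a compact noncharacteristic patch, so no cutoff issues arise.
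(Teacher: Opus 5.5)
\textbf{There is a genuine gap}, at the step that makes the formula pointwise. The density $j_\eps$ is the $[\cdot]_*$-contraction of one particular integrand, the symmetric first-order form \eqref{eq:common}. Moving a divergence between integrands changes the pointwise contraction, so \eqref{eq:star} can only be proved once that form is established.

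The fact you are missing is that, after the two second-order terms are integrated by parts, all first-order terms in $v$ vanish identically in the operator, before any contraction. Write the second identity of \eqref{eq:He} as $\Heps-\Hhor=c_1(\Hone-\Amer)+c_2\Amer$, with $c_1=(\mu_\eps-1)/\cos u$ and $c_2=\eps^2/\Phi_\eps^3$. Then only the Riemannian variations $\dot\Hone=-\Delta^1_\Sigma v-(|\Aone|^2+\Ric^1(\None,\None))v$ and $\dot\Amer$ occur. The $\partial_uc_2$ terms cancel between $-(\partial_uc_2)\Amer\,w\,e_0v$ and the integration by parts of $-c_2\,w\,e_0(e_0v)$. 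What remains is $w\bigl(\ip{\grad u}{\grad v}+(\Amer-\Hone)e_0v\bigr)$ times $\partial_uc_1+(c_2-c_1)\tan u=(\partial_u\mu_\eps+\eps^2\sin u/\Phi_\eps^3)/\cos u=0$.

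The contraction is then immediate. Only $[\partial_uv^1\,\partial_uv^1]_*=-3\cos^4u$ and $[v^1v^1]_*=\cos^4u$ enter; $[v^1\partial_uv^1]_*$ never does. Your plan to kill cross terms through $[v^1\partial_uv^1]_*=\tfrac12\partial_u[v^1v^1]_*$ amounts to an integration by parts on the contracted density, so it cannot give a pointwise identity. Moreover, any surviving $\partial_uc_2\,\Amer\,e_0v$ term contracted this way leaves a spurious multiple of $\Amer^2$.

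Several ingredients are also wrong.
\begin{itemize}
\item The angle varies by $\dot u=-e_0v$, with no factor $1/\cos u$. This follows from $\tfrac{d}{ds}\sin u=-\ip{\nabla^1_\Sigma v}{T^\top}$ and $T^\top=\cos u\,e_0$.
\item The meridional variation is not $-e_0e_0v-(\text{curvature})v$. Keeping both moving $e_0$ slots gives \eqref{eq:dota}. Its first-order term $(\sin u/\cos u)\ip{\grad u}{\grad v}$ is exactly what the cancellation above needs. Its potential $-(|\Amer|^2+|\grad u|^2)v$, multiplied by $c_2$, is the source of $-\eps^2\mu_\eps^3|\grad u|^2$ and of the fourth copy of $|\Amer|^2$.
\item The term $(\partial_u\mu_\eps)\Hhor\dot u$ is first order and belongs to the cancelling group, so it cannot produce $|\grad u|^2vw$.
\item The horizontal-gradient terms do not cancel. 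The principal term $c_1\ip{\grad v}{\grad w}$ and the potential $+c_1|\grad u|^2vw$ together contribute $2|\grad u|^2$ to the bracket multiplying $\cos^2u(1-\mu_\eps)$. This combines with $2|\AngleError|^2+\Ric^1(\None,\None)$ into $4|\AngleError|^2-4\ip{\AngleError}{e_n}+2(n+1)\sin^2u$.
\end{itemize}

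Differentiating the first identity of \eqref{eq:He} instead of the second is harmless in principle. However, sending $(\mu_\eps-1)\dot\Hhor$ through the horizontal linearization brings in $q_H$, the transport derivative $e_0\Hhor$, and gradients of the $u$-dependent weight. You would need \eqref{eq:transport-intermediate} to remove these. Writing $\Hhor=(\Hone-\Amer)/\cos u$ avoids all of this.
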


\begin{proof}
For the Levi--Civita connection of $\gTheta$,
$\nabla^1_\Sigma u=\Aone e_0-e_n$ and
$\Ric^1(\None,\None)=2[n-(n+1){\cos^{2} u}]$. Here $\Ric^1$ and $\operatorname{Sec}^1$
are the ambient curvatures of $\gTheta$; the displayed Ricci formula
uses $\Ric^1|_\xi=-2\gTheta|_\xi$ and $\Ric^1(T,T)=2n$
\cite[Proposition~3.3, $\eps=1$]{Verzellesi2026}.
With $\Delta^1_\Sigma=\operatorname{div}^1_\Sigma\nabla^1_\Sigma$,
normal velocity $v\None$ gives
\[
 \dot u=-e_0v,\qquad
 \dot \Hone=-\Delta^1_\Sigma v-(|\Aone|^2+\Ric^1(\None,\None))v.
\]
The meridional variation, with both moving $e_0$ slots retained, is
\begin{equation}\label{eq:dota}
 \dot{\Amer}=-e_0(e_0v)+({\sin u}/{\cos u})\ip{\grad u}{\grad v}
                   -v(|\Amer|^2+|\grad u|^2).
\end{equation}

The local form identity is
\begin{align}
 (\Qoneeps-\Qonehor)(v,w)=\int
 \biggl\{&\frac{\mu_\eps-1}{\cos u}\ip{\grad v}{\grad w}
       +\frac{\eps^2}{\Phi_\eps^3}(e_0v)(e_0w)\notag\\
 &-\frac{\mu_\eps-1}{\cos u}(|\Aone|^2+\Ric^1(\None,\None))vw\notag\\
 &-\left(\frac{\eps^2}{\Phi_\eps^3}-\frac{\mu_\eps-1}{\cos u}\right)
            (|\Amer|^2+|\grad u|^2)vw\biggr\}\dd\sigma_1 .\label{eq:common}
\end{align}
Differentiating \eqref{eq:He} and integrating by parts gives
\eqref{eq:common}; contraction with \eqref{eq:coeff} gives
\eqref{eq:star} (Appendix~\ref{app:quantitative-algebra}).
The full stationary normal Hessian, including zeroth-order
and prescribed-curvature terms, is given in
Appendix~\ref{app:graph-hessian}.
\end{proof}

Define the meridional density and its total mass by
\[
 m_\eps=4\eps^2\mu_\eps^3|\Amer|^2,\qquad
 \mathcal M_\eps=\int_{\Sigma^*} m_\eps\dd\sigma_H.
\]
Since $\mu_\eps\le1$, $d\sigma_H={\cos u}\,d\sigma_1$ and
$|\Amer|\le|\Aone|$, \eqref{sel:tag-3-4} gives
\[
 \mathcal M_\eps\le4\eps^2\int_\Sigma|\Aone|^2\dd\sigma_1<\infty.
\]

Using $\Phi_\eps\ge\cos u>0$, rationalization gives
\begin{equation}\label{eq:uniform-factors}
 0\le {\cos^{2} u}(1-\mu_\eps)
 =\frac{\eps^2{\cos^{2} u}{\sin^{2} u}}{\Phi_\eps(\Phi_\eps+{\cos u})}
 \le\tfrac12\eps^2{\sin^{2} u}.
\end{equation}
Equation~\eqref{eq:uniform-factors} bounds the first summand of
$j_\eps$ in \eqref{eq:star} from above by
$C\eps^2(1+|\Aone_{HH}|^2+|\AngleError|^2)$:
indeed, $|\mathsf E_{u,n}|\le|\AngleError|$ and
$4|\mathsf E_{u,n}|\le2+2|\AngleError|^2$.
Retaining the negative square gives the pointwise bound
\begin{equation}\label{eq:QRbound}
 j_\eps\le-m_\eps
       +C\eps^2(1+|\Aone_{HH}|^2+|\AngleError|^2).
\end{equation}

\subsection{The global stationary identity and master inequality}

The inequality $\En\le\mathcal R$ requires a global identity, whereas
\eqref{eq:local-noncmc-balance} and \eqref{eq:star} are local.
The factor $(\Hhor-2n)a_\omega$ in the centre identity need not be
integrable by itself near characteristic points. Combining it with
the Jacobi and central-variation terms under one cutoff gives
\eqref{eq:stationary-contraction} after passage to the limit.
Proposition~\ref{sel:prop:physical-form} controls the cutoff gradient
and flux terms.

With the same $\omega$, set $X_\omega=2(n-1)\omega T$.
On $\Sigma^*$ its horizontal speed $v^H_{X_\omega}=2(n-1)\alpha\omega$ gives the exact cancellation
\begin{equation}\label{eq:hat}
 a_\omega+v^H_{X_\omega}=2{\cos^{2} u}(2-\mathsf E_{u,n}).
\end{equation}
As a function of $N^1_T=\sin u$, $\omega$ has derivative
$d\omega/dN^1_T=2\cos u\to0$ and values $\omega\to\pm\pi/2$ as
$N^1_T\to\pm1$. Its constant extensions make $X_\omega$ a $C^1$ field
across the characteristic set.

\begin{lemma}[First variation along central fields]
\label{lem:central-flux}
\ResultBookmark{3}{lem:central-flux}{Lemma}%
Recall $\mu_\eps=(1+\eps^2\alpha^2)^{-1/2}$.
For a $C^1$ scalar surface function $\vartheta$ supported in a compact
noncharacteristic patch of $\Sigma_j$, let $\vartheta T$ denote any $C^1$
ambient extension of that surface field. Then
\begin{equation}\label{eq:central-noether}
 \delta P_\eps[\vartheta T]=\int\mu_\eps\,e_0\vartheta\dd\sigma_1,
 \qquad
 \delta(\Delta P_\eps)[\vartheta T]=\int(\mu_\eps-1)e_0\vartheta\dd\sigma_1 .
\end{equation}
\end{lemma}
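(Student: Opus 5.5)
The plan is to exploit that $T$ is central, so its flow $(z,t)\mapsto(z,t+s)$ consists of left (equivalently right) translations. These are isometries of every $g_\eps$, since $g_\eps=\LTheta+\eps^{-2}\Theta^2$ is left invariant. Hence $T$ is a Killing field for each $g_\eps$, $\eps>0$.

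\textbf{Step 1 (reduction to a surface divergence).} For a Killing field $X$ and a $C^1$ function $\vartheta$ with compact support on a hypersurface, the first variation of $g_\eps$-area is
\[
\operatorname{div}^\eps_\Sigma(\vartheta X)=\vartheta\operatorname{div}^\eps_\Sigma X+\langle\nabla^\eps_\Sigma\vartheta,X^{\top}\rangle_{g_\eps},
\]
and $\operatorname{div}^\eps_\Sigma X=0$. Since the support lies in a compact noncharacteristic patch, this first variation depends only on the trace of $\vartheta T$ on $\Sigma$, so it is independent of the chosen extension. By \eqref{sel:eq:elliptic-metric},
\[
\delta P_\eps[\vartheta T]=\eps\int(\vartheta)_{,T^{\top_\eps}}\dd\sigma_\eps,
\]
where $T^{\top_\eps}$ is the $g_\eps$-tangential part of $T$.

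\textbf{Step 2 (computing $T^{\top_\eps}$).} The conormal covector of $\Sigma$ is $-\cos u\,e_{2n}^\flat+\sin u\,\Theta$. Raising the index with $g_\eps$ gives
\[
N^\eps=\Phi_\eps^{-1}\bigl(-\cos u\,e_{2n}+\eps^2\sin u\,T\bigr),
\]
and therefore $\langle T,N^\eps\rangle_{g_\eps}=\sin u/\Phi_\eps$. Subtracting the normal component gives
\[
T^{\top_\eps}=T-\frac{\sin u}{\Phi_\eps^2}\bigl(-\cos u\,e_{2n}+\eps^2\sin u\,T\bigr)
=\frac{\cos^2u}{\Phi_\eps^2}(T+\alpha e_{2n})=\frac{\cos u}{\Phi_\eps^2}e_0,
\]
using $T+\alpha e_{2n}=e_0/\cos u$. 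Combining this with $\eps\dd\sigma_\eps=\Phi_\eps\dd\sigma_1$ (the $P_\eps$ density) yields
\[
\delta P_\eps[\vartheta T]=\int\frac{\cos u}{\Phi_\eps}\,e_0\vartheta\dd\sigma_1=\int\mu_\eps\,e_0\vartheta\dd\sigma_1,
\]
which is the first formula in \eqref{eq:central-noether}.

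\textbf{Step 3 (the horizontal perimeter and the difference).} For $P_0$ there is no metric, so I would instead justify $\delta P_0[\vartheta T]=\int e_0\vartheta\dd\sigma_1$ by a limit. On a compact noncharacteristic patch, $P_\eps$ of the deformed patch is $\int\Phi_\eps(\None_s)\dd\sigma_{1,s}$. The integrand is smooth in $(\eps,s)$ there, because $|N_H|$ is bounded below on the patch. Hence the $s$-derivative at $s=0$ is continuous in $\eps\in[0,1]$ and equals the formula from Step 2 for $\eps>0$. Letting $\eps\downarrow0$ and using $\mu_\eps\to1$ uniformly on the patch gives the $P_0$ formula. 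Subtracting the two formulas gives the one for $\Delta P_\eps=P_\eps-P_0$.

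Alternatively, Step 3 can be done directly: $T$ preserves both $\Theta$ and $\LTheta$, so it preserves $|N_H|\dd\sigma_1$ infinitesimally, and the same tangential-projection computation with $\Phi_0=\cos u$ gives $\mu_0=1$.

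I expect the main obstacle to be Step 2: keeping the $g_\eps$ versus $g_1$ normalizations of the normal, the surface measures and the gradients consistent. The other delicate point is the uniform-in-$\eps$ justification in Step 3, which relies on the support being noncharacteristic.
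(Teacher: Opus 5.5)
Your proposal is correct, but it reaches the formula by a different mechanism from the paper.

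\textbf{The paper's route.} The paper stays in the fixed metric $g_1$. It treats $P_\eps$ as the anisotropic perimeter $\int\Phi_\eps(\nu)\dd\sigma_1$ and notes that the $g_1$-Killing field $T$ preserves the density $\Phi_\eps$. The first variation along $\vartheta T$ is then $\nabla\vartheta$ paired with the tangential stress
$\Phi_\eps T^\top-\sin u\,(D_\nu\Phi_\eps)^\top=\mu_\eps e_0$, where $T^\top=\cos u\,e_0$. On a noncharacteristic patch $\Phi_0=|N_H|$ is smooth, so the same stress formula at $\eps=0$ gives $e_0$. This yields the $P_0$ and $\Delta P_\eps$ formulas directly, and the $C^1$ case follows by approximating extensions.

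\textbf{Your route.} You use $P_\eps=\eps\operatorname{Area}_{g_\eps}$ and the fact that $T$ is Killing for every $g_\eps$. Only the isotropic identity $\operatorname{div}^\eps_\Sigma(\vartheta T)=T^{\top_\eps}\vartheta$ is then needed. The computation reduces to the $g_\eps$-orthogonal projection $T^{\top_\eps}=(\cos u/\Phi_\eps^2)\,e_0$ and the measure conversion $\eps\dd\sigma_\eps=\Phi_\eps\dd\sigma_1$. The two stresses agree, since $\eps\cdot(\cos u/\Phi_\eps^2)\cdot(\Phi_\eps/\eps)=\mu_\eps$.

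\textbf{What each buys.} Your version avoids anisotropic first-variation formulas. It also makes independence of the extension automatic, since the surface divergence sees only the trace. However, $\eps=0$ has no Riemannian metric, so you need the extra continuity-in-$\eps$ step on the noncharacteristic patch. That step is sound: $|N_H|$ is bounded below there, so the deformed patches stay noncharacteristic for small $s$. Your alternative for that step, the direct projection with $\Phi_0=\cos u$, is essentially the paper's stress computation. The paper's $g_1$-anisotropic formulation treats all $\eps\in[0,1]$ with one formula.
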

\begin{proof}
Since $T$ is a Killing field preserving the anisotropic density
$\Phi_\eps$, only derivatives of $\vartheta$ contribute. Using
$T^\top={\cos u}e_0$ and the one-homogeneous normal gradient, evaluated at $\nu=\None$,
$D_\nu\Phi_\eps=(-{\cos u}e_{2n}+\eps^2{\sin u}T)/\Phi_\eps$, the tangential stress is
\[
 \Phi_\eps T^\top-{\sin u}(D_\nu\Phi_\eps)^\top
 =\frac{{\cos u}}{\Phi_\eps}
   \bigl(\Phi_\eps^2-(\eps^2-1){\sin^{2} u}\bigr)e_0
 =\mu_\eps e_0.
\]
At $\eps=0$ the stress is
$e_0$, proving \eqref{eq:central-noether}. Approximation of ambient
extensions in $C^1$ on a fixed noncharacteristic support gives the
stated $C^1$ case.
\end{proof}

The identities
$e_n\alpha=(\cos u)^{-2}e_nu$ and $\Aone_{HH}=\cos u\Shape$ give
\begin{equation}\label{eq:bounded-potential}
 q_H {\cos^{2} u}=|\Aone_{HH}|^2+4\langle \grad u,e_n\rangle+2(n+1){\sin^{2} u}.
\end{equation}
Also,
\begin{equation}\label{eq:trace-flux-domination}
 {\cos u}|Z_\omega|\le C(1+|\Aone|),
\end{equation}
since $\omega$ is bounded,
${\cos u}\ShapeError=\Aone_{HH}-{\cos u}\Amodel$ and
${\cos u}(\Hhor-2n)=\operatorname{tr}\Aone_{HH}-2n{\cos u}$.

On a compact noncharacteristic support, the curvature linearization
\cite[Lemma~2.8(i)]{JacobiSpectrum} is
\[
\dot \Hhor[X]=-\divh\grad v_X^H-q_Hv_X^H
                         +(X+v_X^H e_{2n})\Hhor,
 \qquad v_X^H=\langle X,-e_{2n}+\alpha T\rangle.
\]
For $X=v\None$, \eqref{eq:angle} gives $v_X^H=v/{\cos u}$ and
\[
 X+v_X^H e_{2n}=v\left(\frac{{\sin^{2} u}}{{\cos u}}e_{2n}+{\sin u}T\right)
                   =\frac{v{\sin u}}{{\cos u}}e_0.
\]
In $\Qonehor(v,w)=\int w\dot \Hhor[v\None]\dd\sigma_1$,
weighted horizontal integration by parts gives
\begin{equation}\label{eq:normal-horizontal-comparison}
 \Qonehor(v,w)=\Qhor(v/{\cos u},w/{\cos u})
                  +\int(vw/{\cos^{2} u}){\sin u}\,e_0\Hhor \dd\sigma_H.
\end{equation}
Write $\mathsf E_H:=\Hhor-\Heps=\Hhor-\Hpr_j$ on $\Sigma$.
The combined density is
\begin{equation}\label{eq:joint-curvature-density}
\begin{aligned}
 \mathfrak r_\eps={}&j_\eps
  +\mathsf E_H\bigl[\cos^2u\sin^2u\Hhor
       +\cos u(3-4\cos^2u)\Amer-2\cos^2u(2-\mathsf E_{u,n})\bigr]\\
 &+4(n-1)\cos u(1-\mu_\eps)\Amer
       +\cos^2u\sin u\,e_0\Hpr_j-(\Hpr_j-2n)a_\omega.
\end{aligned}
\end{equation}

\begin{lemma}[Curvature bounds for the residual densities]
\label{lem:finite-density-domination}
\ResultBookmark{3}{lem:finite-density-domination}{Lemma}%
Fix an index under the hypotheses of Proposition~\ref{sel:prop:physical-form}.
For the densities $j^H$, $j_\eps$ and
$\mathfrak r_\eps$ defined above,
\begin{equation}\label{eq:density-domination}
 {\cos u}\bigl(|j^H|+|\grad u|^2+\mathfrak e+|j_\eps|+|\mathfrak r_\eps|\bigr)
 \le C(1+|\Aone|^2).
\end{equation}
Consequently, all five densities lie in $L^1(d\sigma_H)$ by
\eqref{sel:tag-3-4}.
\end{lemma}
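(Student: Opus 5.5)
The plan is to bound each of the five densities, after multiplication by $\cos u$, by a constant times $1+|\Aone|^2$. Since $d\sigma_H=\cos u\,d\sigma_1$, this gives $\int|\cdot|\,d\sigma_H\le C\int(1+|\Aone|^2)\,d\sigma_1$, which is finite by \eqref{sel:tag-3-4}. Throughout, the only pointwise inputs are the block form \eqref{eq:blocks}, which gives $|\Aone_{HH}|,|\AngleError|,|\Amer|\le|\Aone|$, and the boundedness of $u,\omega,\sin u$ and of $\mu_\eps\in(0,1]$.

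\emph{The terms $|\grad u|^2$ and $\mathfrak e$.} The bound $|\grad u|^2+\mathfrak e\le C(1+|\Aone|^2)$ is exactly \eqref{eq:basic-curvature-domination}, and $\cos u\le1$.

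\emph{The term $j^H$.} By \eqref{eq:combined} and \eqref{eq:bounded-potential},
\[
 j^H=-3|\grad u|^2-|\Aone_{HH}|^2-4\langle\grad u,e_n\rangle-2(n+1)\sin^2u .
\]
Each summand is bounded by $C(1+|\Aone|^2)$; for the cross term use $|\grad u|\le 1+|\Aone|$. This bound holds even without the factor $\cos u$.

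\emph{The term $j_\eps$.} In \eqref{eq:star}, the first summand is controlled by \eqref{eq:uniform-factors}, as already observed before \eqref{eq:QRbound}. For the second summand, $\eps^2\mu_\eps^3(|\grad u|^2+4|\Amer|^2)$, use $\eps\le1$ and $\mu_\eps\le1$. Together these give $|j_\eps|\le C(1+|\Aone|^2)$.

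\emph{The term $\mathfrak r_\eps$.} This is the only place where the factor $\cos u$ is genuinely needed, because $\Hhor$ and $\mathsf E_H$ contain $1/\cos u$. I handle it by expressing these in terms of $\Aone$. From \eqref{eq:geometry}, $\cos u\,\Hhor=\Hone-\Amer$, hence $|\cos u\,\Hhor|\le C|\Aone|$. By \eqref{eq:euler-assumption}, $\Heps=\Hpr_j$, and $|\Hpr_j|\le 2n+1$ by (H1). Therefore $|\cos u\,\mathsf E_H|\le C(1+|\Aone|)$.

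The bracket multiplying $\mathsf E_H$ in \eqref{eq:joint-curvature-density} is
\[
 \cos^2u\sin^2u\,\Hhor+\cos u(3-4\cos^2u)\Amer-2\cos^2u(2-\mathsf E_{u,n}) .
\]
Each of its three pieces carries at least one factor of $\cos u$. It is therefore bounded by $C(1+|\Aone|)$, using $|\cos u\,\Hhor|\le C|\Aone|$, $|\Amer|\le|\Aone|$ and $|\mathsf E_{u,n}|\le|\AngleError|\le|\Aone|$. Multiplying by $|\cos u\,\mathsf E_H|\le C(1+|\Aone|)$ gives $C(1+|\Aone|^2)$.

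The remaining terms of $\mathfrak r_\eps$ are handled as follows.
\begin{itemize}
\item $4(n-1)\cos u(1-\mu_\eps)\Amer$ is bounded by $C|\Aone|$.
\item In $\cos^2u\sin u\,e_0\Hpr_j$, the derivative of $\Hpr_j$ along the unit vector $e_0$ is bounded by $\|\Hpr_j\|_{C^1(U)}\le C$.
\item For $(\Hpr_j-2n)a_\omega$, note from \eqref{eq:ach} that $|a_\omega|\le C(1+|\AngleError|+|\alpha|)$. The term $\alpha=\sin u/\cos u$ is the only unbounded coefficient, and the extra factor $\cos u$ removes it: $\cos u\,|a_\omega|\le C(1+|\Aone|)$.
\end{itemize}
Summing these bounds gives \eqref{eq:density-domination}.

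The main obstacle is bookkeeping rather than any deep step: one must verify that every occurrence of $\alpha$ or of $\Hhor$ is paired with a factor of $\cos u$, whether from the prefactor or from within the bracket. The expressions $\cos u\,\Hhor=\Hone-\Amer$ and $\cos u\,\alpha=\sin u$ handle exactly these cases, so no term of order $|\Aone|^2/\cos u$ survives.
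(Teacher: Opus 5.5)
Your proposal is correct and matches the paper's proof: the densities $j^H$, $|\grad u|^2$, $\mathfrak e$ and $j_\eps$ are bounded using \eqref{eq:basic-curvature-domination}, \eqref{eq:bounded-potential} and the bounded factors $\cos^2u(1-\mu_\eps)$ and $\eps^2\mu_\eps^3$. For $\mathfrak r_\eps$, the factor $\cos u$ is used on $\cos u\,\mathsf E_H=\operatorname{tr}\Aone_{HH}-\cos u\,\Hpr_j$ and on $\cos u\,\alpha=\sin u$ in $a_\omega$, exactly as you do; your identity $\cos u\,\Hhor=\Hone-\Amer$ is the same as the paper's $\operatorname{tr}\Aone_{HH}$.
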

\begin{proof}
Equations~\eqref{eq:basic-curvature-domination} and
\eqref{eq:bounded-potential} control $j^H$, $\grad u$ and $\mathfrak e$.
The bounds ${\cos^{2} u}(1-\mu_\eps)\le1$ and
$\eps^2\mu_\eps^3\le1$ control $j_\eps$ by
$C(1+|\Aone|^2)$.
For $\mathfrak r_\eps$, use
\[
 \begin{aligned}
 |\cos u\,\mathsf E_H|
 &=|\cos u\,\Hhor-\cos u\,\Hpr_j|\\
 &=|\operatorname{tr}\Aone_{HH}-\cos u\,\Hpr_j|
 \le C(1+|\Aone_{HH}|).
 \end{aligned}
\]
The remaining coefficients satisfy
\[
\begin{gathered}
 |\cos^2u\sin^2u\Hhor+\cos u(3-4\cos^2u)\Amer|
 \le C\cos u(|\Aone_{HH}|+|\Amer|),\\
 |2\cos^2u(2-\mathsf E_{u,n})|
 \le C\cos^2u(1+|\mathsf E_{u,n}|).
\end{gathered}
\]
Finally,
\[
 \begin{aligned}
 |4(n-1)\cos u(1-\mu_\eps)\Amer|
 &\le C\cos u|\Amer|,\\
 |\cos^2u\sin u\,e_0\Hpr_j|
 &\le C\|\Hpr_j-2n\|_{C^1},\\
 \cos u|(\Hpr_j-2n)a_\omega|
 &\le C\|\Hpr_j-2n\|_\infty(1+|\mathsf E_{u,n}|),
 \end{aligned}
\]
where the last bound uses $\cos u|\alpha\omega|=|\sin u\,\omega|\le C$.
These bounds prove \eqref{eq:density-domination}.
\end{proof}

\begin{lemma}[Combined stationary identity]
\label{lem:global-identities}
\ResultBookmark{3}{lem:global-identities}{Lemma}%
Under the hypotheses of Proposition~\ref{sel:prop:physical-form},
\begin{equation}\label{eq:stationary-contraction}
 [\BC]_*=-\En-\int|\grad u|^2\dd\sigma_H-2n P_0(\Omega)
                 +\int\mathfrak r_\eps\dd\sigma_H-\int(\partial_{\None}\Hpr_j)\cos^3u\dd\sigma_H,
\end{equation}
\end{lemma}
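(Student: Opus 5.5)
Throughout, we fix $j$ and work first with a common cutoff $\eta=\eta_{j,\ell}$. By Proposition~\ref{sel:prop:physical-form}, $[\BC]_*=\lim_{\ell}[\BC]_{*,\eta}$, so the identity \eqref{eq:stationary-contraction} can be proved at the level of compactly supported speeds $\eta\vT,\eta\vF,\eta\vK$ in $\Sigma^*$, with $\ell\to\infty$ taken at the end. On such supports all the local formulas hold.

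\textbf{Step 1: split the Hessian.} From the decomposition preceding \eqref{eq:anisotropy},
\[
\BC(v\None,w\None)=\Qonehor(v,w)+(\Qoneeps-\Qonehor)(v,w)-\int(\partial_{\None}\Hpr_j)vw\dd\sigma_1 .
\]
We contract each piece with $[\cdot]_{*,\eta}$.

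The last piece produces $-\int\eta^2(\partial_{\None}\Hpr_j)[v^1v^1]_*\dd\sigma_1$. By \eqref{eq:coeff} and $d\sigma_H=\cos u\,d\sigma_1$, this equals $-\int\eta^2(\partial_{\None}\Hpr_j)\cos^3u\dd\sigma_H$.

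The middle piece is the local form \eqref{eq:common}. Contracting it gives $\int\eta^2 j_\eps\dd\sigma_H$, plus terms in which a derivative falls on $\eta$. Those terms are bounded by $C_j(1+|\Aone|)|\nabla^1_\Sigma\eta|$, using the profile bounds and $\nabla^1_\Sigma u=\Aone e_0-e_n$.

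For the first piece we use \eqref{eq:normal-horizontal-comparison}. Its $\Qhor$ part, contracted on the horizontal speeds $(\alpha,F,K)$, gives the density $j^H$ of \eqref{eq:combined} up to cutoff fluxes. The transport term gives
\[
\int\eta^2[v^Hv^H]_*\sin u\,e_0\Hhor\dd\sigma_H=\int\eta^2\cos^2u\sin u\,e_0\Hhor\dd\sigma_H .
\]

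\textbf{Step 2: insert the centre identity.} In $j^H=-3|\grad u|^2-q_H\cos^2u$, we substitute Proposition~\ref{prop:noncmc-local-centre}:
\[
j^H=-\mathfrak e-|\grad u|^2-2n-(\Hhor-2n)a_\omega+\divh Z_\omega .
\]
The divergence, tested against $\eta^2$, becomes $-\int\langle Z_\omega,\grad\eta^2\rangle\dd\sigma_H$. This flux vanishes in the limit by \eqref{eq:trace-flux-domination} and \eqref{eq:cap-est}, and $\int\eta^2$ of the remaining terms tends to $-\En-\int|\grad u|^2\dd\sigma_H-2nP_0(\Omega)$.

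What remains is
\[
-(\Hhor-2n)a_\omega+\cos^2u\sin u\,e_0\Hhor .
\]
Neither term is integrable alone near characteristic points, which is the main obstacle. We write $\Hhor=\Hpr_j+\mathsf E_H$. This gives $-(\Hpr_j-2n)a_\omega-\mathsf E_H a_\omega$ and $\cos^2u\sin u\,e_0\Hpr_j+\cos^2u\sin u\,e_0\mathsf E_H$.

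\textbf{Step 3: handle the two bad terms.}

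\emph{The term with $e_0\mathsf E_H$.} We integrate by parts along $e_0$, using $\operatorname{div}^1_\Sigma e_0=-\sin u\Hhor$ and $e_0u=\Amer$ from \eqref{eq:geometry}. The derivative then falls on $\cos^3u\sin u$ relative to $d\sigma_1$, producing
\[
\mathsf E_H\bigl[\cos^2u\sin^2u\Hhor+\cos u(3-4\cos^2u)\Amer\bigr],
\]
plus a flux with $e_0\eta^2$ that is removed by \eqref{eq:cap-est}.

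\emph{The term $-\mathsf E_H a_\omega$.} We use the central field $X_\omega=2(n-1)\omega T$ and \eqref{eq:hat}. Stationarity gives $\delta P_\eps[X_\omega\eta^2]=\int\Hpr_j\,\eta^2 v^H_{X_\omega}\dd\sigma_H$. Lemma~\ref{lem:central-flux} rewrites the left side as $\int\mu_\eps e_0(\eta^2 2(n-1)\omega)\dd\sigma_1$. Integrating by parts once more and comparing with the horizontal first variation ($\mu_\eps\to1$) expresses $\int\eta^2\mathsf E_H v^H_{X_\omega}$ through $4(n-1)\cos u(1-\mu_\eps)\Amer$, using $e_0\omega=2\cos^2u\,\Amer$. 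Adding this to $-\mathsf E_H a_\omega$ leaves exactly $-2\cos^2u(2-\mathsf E_{u,n})\mathsf E_H$.

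The bookkeeping of signs and of the factor $(1-\mu_\eps)$ in this central-variation step is where I expect the real work. I would first check it on the model case $\Hhor\equiv2n$, $\eps=0$.

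\textbf{Step 4: pass to the limit.} Collecting the terms reproduces the density $\mathfrak r_\eps$ of \eqref{eq:joint-curvature-density}. Lemma~\ref{lem:finite-density-domination} puts every combined density in $L^1(d\sigma_H)$, so dominated convergence with $\eta_\ell\to1$ passes to the limit. All fluxes vanish by \eqref{eq:cap-est}, which yields \eqref{eq:stationary-contraction}.
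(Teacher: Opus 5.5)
Your proposal is correct and follows the paper's proof essentially step for step. You contract the three Hessian pieces under a common cutoff, insert the centre identity, integrate the $e_0\mathsf E_H$ term by parts against $\cos^2u\sin u\,e_0$, trade $-\mathsf E_Ha_\omega$ using \eqref{eq:hat} and Lemma~\ref{lem:central-flux} applied to $\eta^2X_\omega$, and pass to the limit via Lemma~\ref{lem:finite-density-domination} and \eqref{eq:cap-est}.

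There are two small touch-ups. The contracted cutoff remainders also contain quadratic terms such as $\mu_\eps\cos^3u|\grad\eta|^2$, which are removed by \textup{(H2)} directly, not by a bound of the form $C_j(1+|\Aone|)|\nabla^1_\Sigma\eta|$. In the central-variation step no further integration by parts is needed: you only need $e_0(\eta^2\omega)=2\eta^2\cos^2u\,\Amer+\omega\,e_0(\eta^2)$ together with $-\mathsf E_Ha_\omega=-2\cos^2u(2-\mathsf E_{u,n})\mathsf E_H+\mathsf E_Hv^H_{X_\omega}$.
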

\begin{proof}
Fix $j$ and $0\le\eta\le1$ in $C_c^1(\Sigma^*)$;
use $\eta^2$ in the linear identities. Contract \eqref{eq:common} and
\eqref{eq:normal-horizontal-comparison}, and add the local centre
identity \eqref{eq:local-noncmc-balance}. Since
$j^H=-\mathfrak e-|\grad u|^2-2n
       -(\Hhor-2n)a_\omega+\divh Z_\omega$,
the remaining curvature terms are
$-(\Hhor-2n)a_\omega+{\cos^{2} u}{\sin u}\,e_0\Hhor$.

Using $e_0u=\Amer$ and
$\operatorname{div}^1_\Sigma e_0=-{\sin u}\Hhor$,
the vector field $\cos^2u\sin u\,e_0$ has $d\sigma_H$-divergence
\[
 \begin{aligned}
 \operatorname{div}_{d\sigma_H}(\cos^2u\sin u\,e_0)
 &=(\cos u)^{-1}\operatorname{div}^1_\Sigma(\cos^3u\sin u\,e_0)\\
 &={\cos u}(4{\cos^{2} u}-3)\Amer-{\cos^{2} u}{\sin^{2} u}\Hhor\\
 &=-(\cos^2u\sin^2u \Hhor+\cos u(3-4\cos^2u) \Amer).
 \end{aligned}
\]
As $\Hhor=\Hpr_j+\mathsf E_H$ and $\Hhor-2n=\mathsf E_H+(\Hpr_j-2n)$, integration by
parts gives
\[
 \begin{aligned}
 \int\eta^2 \cos^2u\sin u\,e_0\mathsf E_H\dd\sigma_H
 &=-\int \mathsf E_H\operatorname{div}_{d\sigma_H}(\eta^2\cos^2u\sin u\,e_0)\dd\sigma_H\\
 &=\int\eta^2\mathsf E_H(\cos^2u\sin^2u\Hhor+\cos u(3-4\cos^2u)\Amer)\dd\sigma_H\\
 &\qquad-\int\mathsf E_H\cos^2u\sin u\,e_0(\eta^2)\dd\sigma_H.
 \end{aligned}
\]
Also $-[\mathsf E_H+(\Hpr_j-2n)]a_\omega
      =-2\mathsf E_H\cos^2u(2-\mathsf E_{u,n})+\mathsf E_H v^H_{X_\omega}-(\Hpr_j-2n)a_\omega$.
Apply \eqref{eq:central-noether} on the same support to
$\eta^2X_\omega$ for the term $\mathsf E_Hv^H_{X_\omega}$.
The first variation and the definition of $\mathsf E_H$ give
\[
 \begin{aligned}
 -\int\eta^2 \mathsf E_H v^H_{X_\omega}\dd\sigma_H
 &=\delta(\Delta P_\eps)[\eta^2 X_\omega]
   =2(n-1)\int(\mu_\eps-1)e_0(\eta^2\omega)\dd\sigma_1\\
 &=2(n-1)\int(\mu_\eps-1)
                 (2\eta^2 {\cos^{2} u}\Amer+\omega e_0(\eta^2))\dd\sigma_1.
 \end{aligned}
\]
Using $d\sigma_H={\cos u}\,d\sigma_1$, the term
$+\int\eta^2\mathsf E_Hv^H_{X_\omega}\dd\sigma_H$ in the contraction equals
\[
 \int\eta^2 4(n-1)\cos u(1-\mu_\eps) \Amer\dd\sigma_H
       -2(n-1)\int(\mu_\eps-1)\omega e_0(\eta^2)\dd\sigma_1.
\]
These substitutions give exactly $\mathfrak r_\eps$ in
\eqref{eq:joint-curvature-density} before any cutoff limit.
Write $R_\eta$ for the sum of the cutoff remainders in this proof:
\[
 [\BC]_{*,\eta}
 =\int\eta^2\{-\mathfrak e-|\grad u|^2-2n
               +\mathfrak r_\eps-(\partial_{\None}\Hpr_j){\cos^{3} u}\}\dd\sigma_H
       +R_\eta.
\]

For the localization remainders, expand the principal terms of
\eqref{eq:common} directly. The product rule gives
$\nabla^1_\Sigma(\eta v_i^1)=v_i^1\nabla^1_\Sigma\eta
 +\eta \partial_u v_i^1\nabla^1_\Sigma u$. Thus the difference between the
contracted principal density and $\eta^2$ times its value at $\eta=1$ is
\[
 \begin{aligned}
 &\frac{\mu_\eps-1}{\cos u}
  \bigl([v^1v^1]_*|\grad\eta|^2
       +2\eta[v^1\partial_uv^1]_*\langle\grad u,\grad\eta\rangle\bigr)\\
 &\qquad+\frac{\eps^2}{\Phi_\eps^3}
  \bigl([v^1v^1]_*(e_0\eta)^2+2\eta[v^1\partial_uv^1]_*\Amer\,e_0\eta\bigr).
 \end{aligned}
\]
For the scalar form, the same expansion with
$v_i^H=v_i^1/{\cos u}$ gives
\[
 [v^Hv^H]_*|\grad\eta|^2
 +2\eta[v^H\partial_uv^H]_*\langle \grad u,\grad\eta\rangle.
\]
Substituting \eqref{eq:coeff} and the contractions of the horizontal speeds, then using
$d\sigma_H={\cos u}\,d\sigma_1$, gives
\begin{equation}\label{eq:canonical-localization}
\begin{aligned}
 \int\bigl\{
 &\mu_\eps{\cos^{3} u}|\grad\eta|^2
 -2{\sin u}{\cos^{2} u}(2\mu_\eps-1)\eta
     \langle\grad u,\grad\eta\rangle\\
 &+\eps^2\mu_\eps^3\bigl({\cos u}(e_0\eta)^2
                  -4{\sin u}\eta \Amer\,e_0\eta\bigr)\bigr\}\dd\sigma_1.
\end{aligned}
\end{equation}
Here $0<\mu_\eps\le1$, $|2\mu_\eps-1|\le1$, and
${\cos u}\eps^2/\Phi_\eps^3=\mu_\eps\eps^2/\Phi_\eps^2\le1$ because $\Phi_\eps\ge\eps$.
Using
$|\nabla^1_\Sigma u|\le1+|\Aone|$, these terms are bounded by
$C\int\{{\cos^{3} u}|\nabla^1_\Sigma\eta|^2+
 {\cos^{2} u}(1+|\Aone|)|\nabla^1_\Sigma\eta|\}\dd\sigma_1$.
The remaining fluxes are those of $Z_\omega$, $\mathsf E_H\cos^2u\sin u\,e_0$ and the central
stress above. By \eqref{eq:trace-flux-domination}, the bound for ${\cos u}\mathsf E_H$
in Lemma~\ref{lem:finite-density-domination}, and boundedness of
$(\mu_\eps-1)\omega$, all remainders satisfy
\begin{equation}\label{eq:localization-bound}
 |R_\eta|\le C\int
 \{{\cos^{3} u}|\nabla^1_\Sigma\eta|^2
       +(1+|\Aone|)|\nabla^1_\Sigma\eta|\}\dd\sigma_1.
\end{equation}

Take $\eta=\eta_{j,\ell}$. Since ${\cos^{3} u}\le1$, \textup{(H2)}
controls the squared-gradient term in \eqref{eq:localization-bound};
\eqref{eq:cap-est} sends its linear-gradient term to zero.
Dominated convergence applies to the combined density by
Lemma~\ref{lem:finite-density-domination}, while
\cref{sel:prop:physical-form,lem:global-domain} give
$[\BC]_{*,\eta_{j,\ell}}\to[\BC]_*$.
This proves \eqref{eq:stationary-contraction}.
\end{proof}

\phantomsection\label{sec:trace}

The global identity \eqref{eq:stationary-contraction} still contains
the unconstrained Hessian $[\BC]_*$, since $I_F$ need not vanish.
Inserting $\Xvp=X_F-\kappa\Dil$ supplies the admissible Hessian;
the correction leaves dilation and central-translation terms.
The flux identity of Lemma~\ref{lem:global-domain} cancels their
constant and linear angle contributions; Cauchy--Schwarz controls the
remaining scalar square. Stability then gives $0\le\En\le\mathcal R$
in \eqref{eq:master}.

The combined residual is
\begin{align}
 \mathcal R={}&\int\mathfrak r_\eps\dd\sigma_H-\int(\partial_{\None}\Hpr_j)\cos^3u\dd\sigma_H
       +\tfrac{\pi^2}{4}\BC(T,T)+\BC(T,X_K)\notag\\
 &-2\kappa \mathcal R_{\Dil}(X_F)+\kappa^2\mathcal R_{\Dil}(\Dil)
       -\frac{2n\mathsf D\,I_F^2}{I_{\Dil}(2n+1)P_0(\Omega)}.
 \label{eq:Rexpand}
\end{align}
By the definition of the contraction,
$\BC(X_F,X_F)=[\BC]_*+\tfrac{\pi^2}{4}\BC(T,T)+\BC(T,X_K)$,
and bilinearity with $\Xvp=X_F-\kappa\Dil$ and $\kappa=I_F/I_{\Dil}$ gives
\[
 \BC(\Xvp,\Xvp)=\BC(X_F,X_F)-2\kappa \mathcal R_{\Dil}(X_F)+\kappa^2\mathcal R_{\Dil}(\Dil)
                         +2n I_F^2/I_{\Dil}.
\]
Since $\grad u=\AngleError-e_n$,
$\int|\grad u|^2\dd\sigma_H=
\int|\AngleError|^2\dd\sigma_H-2\beta+P_0(\Omega)$.
Substituting $\mathsf D=2n I_{\Dil}-(2n+1)P_0(\Omega)$ and
$2n I_F=(2n+1)P_0(\Omega)-\beta$ leaves the scalar cancellation
\[
 \begin{aligned}
 -2\beta+(2n+1)P_0(\Omega)
  -\frac{((2n+1)P_0(\Omega)-\beta)^2}{(2n+1)P_0(\Omega)}
  =-\frac{\beta^2}{(2n+1)P_0(\Omega)}.
 \end{aligned}
\]
Together with \eqref{eq:stationary-contraction}, these identities give
\begin{equation}\label{eq:master}
 \begin{aligned}
 \mathcal R-\En
 &=\BC(\Xvp,\Xvp)+\int|\AngleError|^2\dd\sigma_H
                  -\frac{\beta^2}{(2n+1)P_0(\Omega)}\\
 &\ge\BC(\Xvp,\Xvp)+\frac{2n}{2n+1}
                       \int|\AngleError|^2\dd\sigma_H\ge0.
 \end{aligned}
\end{equation}
Here $\beta^2\le P_0(\Omega)\int|\AngleError|^2\dd\sigma_H$
by Cauchy--Schwarz, and $\BC(\Xvp,\Xvp)\ge0$ by
Lemma~\ref{lem:global-domain}.
\subsection{The combined density estimate for
\texorpdfstring{$\mathcal R_1$}{R1}}\label{sec:joint-density}

By \textup{(H1)} and \eqref{eq:euler-assumption},
$\Heps=\Hpr_j$ and $|\Heps|\le2n+1$.
The mean-curvature identity \eqref{eq:He} gives
$\Amer=\Phi_\eps^3[\Heps(1-\mu_\eps)-\mu_\eps \mathsf E_H]/\eps^2$.
Substitute it into the $\mathsf E_H$- and $\Amer$-terms of
\eqref{eq:joint-curvature-density} and subtract $m_\eps/2$.
On $\Sigma^*$ set
\[
 a_\eps(s)=\frac{\Phi_\eps^3}{\eps^2}
       [\Heps(1-\mu_\eps)-\mu_\eps s]
\]
and let $p_\eps(s)$ be the resulting polynomial when
$\mathsf E_H$ is replaced by $s$:
\[
 \begin{aligned}
 p_\eps(s)={}&s\bigl[\cos^2u\sin^2u(\Heps+s)
    +\cos u(3-4\cos^2u)a_\eps(s)
    -2\cos^2u(2-\mathsf E_{u,n})\bigr]\\
 &+4(n-1)\cos u(1-\mu_\eps)a_\eps(s)
    -2\eps^2\mu_\eps^3a_\eps(s)^2.
 \end{aligned}
\]
The coefficient calculation in Appendix~\ref{app:quantitative-algebra}
gives
\[
 \frac12\partial_s^2p_\eps\le
 -\frac{\cos^2u\Phi_\eps^2}{\eps^2},\qquad
 |\partial_sp_\eps(0)|\le C\cos u\Phi_\eps(1+|\mathsf E_{u,n}|),\qquad
 p_\eps(0)\le C\eps^2.
\]
Since $\cos u>0$ on $\Sigma^*$, completing the square and setting
$s=\mathsf E_H$ gives
\begin{equation}\label{eq:joint-global-square}
 p_\eps(\mathsf E_H)\le C\eps^2(1+\mathsf E_{u,n}^2).
\end{equation}
Since $\Aone_{HH}={\cos u}(\Amodel+\ShapeError)$ and
$|\mathsf E_{u,n}|\le|\AngleError|$, \eqref{eq:defect-square} gives
\[
 |\Aone_{HH}|^2+|\AngleError|^2+\mathsf E_{u,n}^2\le C(1+\mathfrak e).
\]
Combining \eqref{eq:joint-global-square} with \eqref{eq:QRbound}
leaves half the negative square:
\begin{equation}\label{eq:joint-residual-pointwise}
 j_\eps+p_\eps(\mathsf E_H)+\tfrac12m_\eps
 \le-\tfrac12m_\eps+C\eps^2(1+\mathfrak e).
\end{equation}
The density \eqref{eq:joint-curvature-density} is precisely the
left side of \eqref{eq:joint-residual-pointwise} plus
$\cos^2u\sin u\,e_0\Hpr_j-(\Hpr_j-2n)a_\omega$.

The prescribed-curvature term satisfies
\[
 \left|\int (\Hpr_j-2n)a_\omega\dd\sigma_H\right|
 \le C\|\Hpr_j-2n\|_\infty\left(P_0(\Omega)+I_F+\sqrt{P_0(\Omega)\En}\right)
 \le C\|\Hpr_j-2n\|_\infty(V+\En).
\]
This uses $|a_\omega|\le C(1+|\mathsf E_{u,n}|+|\alpha |)$ and the bounds for $I_F$
in Lemma~\ref{lem:global-domain}.
Also,
\[
\left|\int {\cos^{2} u}{\sin u}\,e_0\Hpr_j\dd\sigma_H\right|
\le C\|\Hpr_j-2n\|_{C^1}P_0(\Omega).
\]
Integrate \eqref{eq:joint-residual-pointwise} using the integrability
proved in Lemma~\ref{lem:finite-density-domination}.
Together with \eqref{eq:joint-curvature-density}, this yields
\begin{equation}\label{eq:joint-residual-bound}
 \int\mathfrak r_\eps\dd\sigma_H
 \le-\tfrac12\mathcal M_\eps
       +C\eps^2(P_0(\Omega)+\En)+C\|\Hpr_j-2n\|_{C^1}(V+\En).
\end{equation}

\subsection{The mixed dilation estimate for
\texorpdfstring{$\mathcal R_2$}{R2}}
\label{sec:scaling}\label{app:potential-transport}

The dilation residual depends on the $\eps$-derivative of $P_\eps$.
The weighted cap integral is bounded by $\tfrac83\Delta P_\eps$
in \eqref{eq:scaling-excess-weight}; on $\{\cos u\ge1/4\}$,
the dilation-correction coefficients are $O(\eps^2)$.

Let $\Lambda_\eps=\eps\partial_\eps P_\eps$, with $\Omega$ fixed in
$\partial_\eps$. Differentiating
$P_\eps(\Dil_{e^s}\Omega)=e^{(2n+1)s}P_{\eps e^{-s}}(\Omega)$ at $s=0$ gives
\[
 \delta_{\Dil}P_\eps=(2n+1)P_\eps-\Lambda_\eps.
\]
For a smooth ambient field $Y$ whose restriction to $\Sigma$ is
compactly supported in $\Sigma^*$, differentiate the Gauss--Green identity
\[
 \delta_{\Dil}\left(P_\eps-\int_\Omega\Hpr_j\dd x\right)
 =(2n+1)P_\eps-\Lambda_\eps
       -\int_\Omega(\Dil\Hpr_j+(2n+2)\Hpr_j)\dd x
\]
along $Y$, holding $\Dil$ fixed. Stationarity removes the acceleration
term and gives $\delta_YP_\eps=\int\Hpr_jv_Y^H\dd\sigma_H$.
Adding $2n\delta_YV$ yields
\begin{equation}\label{eq:mixed}
 \mathcal R_{\Dil}(Y)=-\delta_Y\Lambda_\eps
       +\int(2n-\Hpr_j-\Dil\Hpr_j)v_Y^H\dd\sigma_H.
\end{equation}
Fix a smooth cutoff $0\le\zeta({\cos u})\le1$, equal to
one for ${\cos u}\le1/4$ and zero for ${\cos u}\ge1/2$.
Its derivative is supported in the noncharacteristic transition
$1/4\le{\cos u}\le1/2$.
By the construction of $X_F$ above, $X_F=(F/\alpha)T$ on the support of $\zeta$, where
$F/\alpha={\cos u}/{\sin u}+u$ and
$\partial_u(F/\alpha)=-{\cos^{2} u}/{\sin^{2} u}$.
Here $|{\sin u}|\ge\sqrt3/2$, so the quotient is smooth.
Holding the surface, field and $\gTheta$ fixed, differentiate
\eqref{eq:central-noether} in $\eps$:
\[
 \eps\partial_\eps\mu_\eps
 =\eps\partial_\eps\frac {\cos u}\Phi_\eps
 =-\frac{{\cos u}\eps^2{\sin^{2} u}}{\Phi_\eps^3}.
\]
Since $e_0(F/\alpha)=\partial_u(F/\alpha)\Amer=-{\cos^{2} u}\Amer/{\sin^{2} u}$ and $(F/\alpha)={\cos u}F/{\sin u}$,
\[
 \begin{aligned}
 \delta \Lambda_\eps[\zeta (F/\alpha)T]
 &=-\eps^2\int\frac{{\cos u}{\sin^{2} u}}{\Phi_\eps^3}
                   \{\zeta e_0(F/\alpha)+(F/\alpha)e_0\zeta\}\dd\sigma_1\\
 &=\eps^2\int\frac{\zeta{\cos^{3} u}\Amer
                  -{\cos^{2} u}{\sin u}F e_0\zeta}{\Phi_\eps^3}\dd\sigma_1\\
 &=\eps^2\int\frac{\zeta{\cos^{2} u}\Amer
                  -{\sin u}{\cos u}F e_0\zeta}{\Phi_\eps^3}\dd\sigma_H.
 \end{aligned}
\]
Off the cap, differentiating \eqref{eq:He} in $\eps$ and using
$1-\mu_\eps^2=\eps^2\sin^2u/\Phi_\eps^2$ gives
$\eps\partial_\eps \Heps
 =\frac{\eps^2}{\Phi_\eps^3}\{-{\cos u}{\sin^{2} u}\Hhor
                       +(3\mu_\eps^2-1)\Amer\}$.
Thus
\begin{equation}\label{eq:scaling-full-flux}
\begin{aligned}
 \delta \Lambda_\eps[X_F]
 ={}&\eps^2\int\frac{\zeta{\cos^{2} u}\Amer
                 -{\sin u}{\cos u}F e_0\zeta}{\Phi_\eps^3}\dd\sigma_H\\
 &+\eps^2\int(1-\zeta)\frac{F}{\Phi_\eps^3}
     \{-{\cos u}{\sin^{2} u}\Hhor+(3\mu_\eps^2-1)\Amer\}\dd\sigma_H.
\end{aligned}
\end{equation}

For $\eps>0$, $\Lambda_\eps$ has smooth anisotropic density
$\eps^2{\sin^{2} u}/\Phi_\eps$ on the $\gTheta$ unit normal bundle.
Its stress is bounded, and its first-variation curvature density is
bounded by $C_j(1+|\Aone|)$.

For a unit tangent $E$, differentiating $\zeta X_F=(\zeta F/\alpha)T$
and $e_{2n}=-(\None-\sin u\,T)/\cos u$ gives
\[
 \begin{aligned}
 \nabla^1_E(\zeta X_F)
 &=\partial_u(\zeta F/\alpha)E(u)T+(\zeta F/\alpha)\nabla^1_ET,\\
 \nabla^1_E e_{2n}&=-\frac{\Aone E}{\cos u}
       +E(u)T+\tan u\bigl(\nabla^1_ET+E(u)e_{2n}\bigr).
 \end{aligned}
\]
The coefficients are bounded on the cap and on $\cos u\ge1/4$,
respectively. Since $|E(u)|\le1+|\Aone|$ and $|\nabla^1_ET|\le1$, both
$\zeta X_F$ and $(1-\zeta)X_F$ are bounded and their surface
derivatives are bounded by $C_j(1+|\Aone|)$.
The common-cutoff stress cost is at most
$C_j\int|\nabla^1_\Sigma(\eta_\ell^2)|\dd\sigma_1\to0$ by
\eqref{eq:cap-est}; all remaining densities are integrable by \eqref{sel:tag-3-4}.
Thus \eqref{eq:scaling-full-flux} holds globally.
Proposition~\ref{sel:prop:physical-form} also passes \eqref{eq:mixed}
to $Y=X_F$, whose horizontal speed is $F$.

On $0<{\cos u}\le1/2$, rationalizing with
$\Phi_\eps^2-\cos^2u=\eps^2\sin^2u$ gives
\[
 \frac{\eps^2\cos u/\Phi_\eps^3}{\Phi_\eps/\cos u-1}
 =\frac{\mu_\eps^2(1+\mu_\eps)}{\sin^2u}\le\frac83,
\]
since $\mu_\eps\le1$ and $\sin^2u\ge3/4$.
By \textup{(H2)}, $\Sigma^*$ carries all the Riemannian perimeter.
Multiplication by the excess density and integration give
\begin{equation}\label{eq:scaling-excess-weight}
 \begin{gathered}
 \Delta P_\eps
 =\int_{\Sigma^*}(\Phi_\eps-\cos u)\dd\sigma_1
 =\int_{\Sigma^*}(\Phi_\eps/\cos u-1)\dd\sigma_H,\\
 \int_{\{{\cos u}\le1/2\}}\frac{\eps^2{\cos u}}{\Phi_\eps^3}\dd\sigma_H
 \le\frac83\Delta P_\eps.
 \end{gathered}
\end{equation}

On ${\cos u}\ge1/4$, $\Phi_\eps^{-3}\le64$ and
$F,\zeta'$ are bounded. Since
$e_0\zeta=-{\sin u}\zeta'({\cos u})\Amer$,
the coefficients of $\Amer$ and $\Hhor$ in
\eqref{eq:scaling-full-flux} are $O(\eps^2)$.
Here $\mu_\eps\ge\cos u\ge1/4$, so
$m_\eps\ge\eps^2|\Amer|^2/16$ and Young's inequality gives
\[
 C|\kappa|\eps^2\int_{\{\cos u\ge1/4\}}|\Amer|\dd\sigma_H
 \le\tfrac18\mathcal M_\eps+C\eps^2\kappa^2P_0(\Omega).
\]
On the cap, completing a square pointwise gives
\[
 2|\kappa|\frac{\eps^2\cos^2u}{\Phi_\eps^3}|\Amer|
 \le\tfrac14m_\eps+\kappa^2\frac{\eps^2\cos u}{\Phi_\eps^3}.
\]
Multiply by $0\le\zeta\le1$ and apply
\eqref{eq:scaling-excess-weight}. The cap contribution is at most
$\mathcal M_\eps/4+(8/3)\kappa^2\Delta P_\eps$.
Their combined cost is at most $3\mathcal M_\eps/8$, including
where the supports overlap.
For the curvature term, ${\cos u}\Hhor=\operatorname{tr}\Aone_{HH}$,
\eqref{eq:bound} and \eqref{eq:global-support} give
\[
 \begin{aligned}
 C|\kappa|\eps^2\int_{\{\cos u\ge1/4\}}|\Hhor|\dd\sigma_H
 &\le C|\kappa|\eps^2P_0(\Omega)^{1/2}
                  (P_0(\Omega)+\En)^{1/2}\\
 &\le C\eps^2(V+\En).
 \end{aligned}
\]
Together with \eqref{eq:scaling-full-flux}, these bounds yield
\begin{equation}\label{eq:scaling-direct-bound}
 2|\kappa\,\delta\Lambda_\eps[X_F]|
 \le\tfrac38\mathcal M_\eps+\tfrac83\kappa^2\Delta P_\eps
       +C\eps^2(V+\En).
\end{equation}
Since $F>0$, the prescribed-curvature term in \eqref{eq:mixed}
is bounded in absolute value by
$C\|\Hpr_j-2n\|_{C^1}I_F$. Multiplying by $2|\kappa|$ and using
\eqref{eq:global-support} gives
\begin{equation}\label{eq:scaling-bound}
 -2\kappa\mathcal R_{\Dil}(X_F)
 \le\tfrac38\mathcal M_\eps+\tfrac83\kappa^2\Delta P_\eps
       +C(\eps^2+\|\Hpr_j-2n\|_{C^1})(V+\En).
\end{equation}
The positive excess term will be combined with the negative excess
in \eqref{eq:dilation-residual-combined}.

\subsection{The remaining residual
\texorpdfstring{$\mathcal R_3$}{R3} and absorption}\label{sec:remaining-residual}

\[
 \begin{aligned}
 \mathcal R_3={}&-\int(\partial_{\None}\Hpr_j)\cos^3u\dd\sigma_H
       +\tfrac{\pi^2}{4}\BC(T,T)+\BC(T,X_K)\\
 &+\kappa^2\mathcal R_{\Dil}(\Dil)
       -\frac{2n\mathsf D\,I_F^2}{I_{\Dil}(2n+1)P_0(\Omega)}.
 \end{aligned}
\]

Central translation preserves $P_\eps$ and $V$. Since
$\operatorname{div}T=0$, change of variables gives
$\delta_T\int_\Omega \Hpr_j\dd x=\int_\Omega T\Hpr_j\dd x$.
At a stationary set, differentiating this identity along $X_K$ and $T$
with $T$ held fixed, and applying Gauss--Green, gives
\begin{equation}\label{eq:central-pinning}
 \BC(T,X_K)=-\int(T\Hpr_j)K\dd\sigma_H,\qquad
 \BC(T,T)=-\int_\Omega T^2\Hpr_j\dd x.
\end{equation}
The local identity is $\BC(T,Y)=-\int(T\Hpr_j)v_Y^1\dd\sigma_1$
for $Y$ supported in a compact subset of $\Sigma^*$.
Apply it to $Y=\eta_{j,\ell}X_K$ and $Y=\eta_{j,\ell}T$.
The left sides converge by \cref{sel:prop:physical-form}; the right
sides converge by boundedness of the speeds and finite $\sigma_1$ area.
Gauss--Green gives
$\int(T\Hpr_j)v_T^1\dd\sigma_1=\int_\Omega T^2\Hpr_j\dd x$.
Since $K$ is bounded, the absolute
values are at most $C\|\Hpr_j-2n\|_{C^1}P_0(\Omega)$ and
$C|\Omega|\|\Hpr_j-2n\|_{C^2}$,
respectively.

The contracted prescribed-curvature correction is
$-\int(\partial_{\None}\Hpr_j)\cos^3u\dd\sigma_H$; its absolute value is bounded by
$C\|\Hpr_j-2n\|_{C^1}P_0(\Omega)$.

The twice-differentiated scaling identities and stationarity give
the following combined residual; the intermediate expansion is in
Appendix~\ref{app:quantitative-algebra}.
\begin{equation}\label{eq:dilation-residual-combined}
\begin{aligned}
 &\kappa^2\mathcal R_{\Dil}(\Dil)-\frac{2n\mathsf D\,I_F^2}{I_{\Dil}(2n+1) P_0(\Omega)}\\
 &=\kappa^2\left\{-(2n+1)\Delta P_\eps-2n\Lambda_\eps
       +\eps\partial_\eps \Lambda_\eps
       -\int_\Omega(\Dil^2\Hpr_j+(2n+2)\Dil\Hpr_j)\dd x
       -\frac{\mathsf D^2}{(2n+1) P_0(\Omega)}\right\}.
\end{aligned}
\end{equation}
On the fixed surface, differentiating the $d\sigma_1$-density of
$\Lambda_\eps$ gives
\[
 0\le\eps\partial_\eps\left(\eps^2\sin^2u/\Phi_\eps\right)
   =\left(2-\frac{\eps^2{\sin^{2} u}}{\Phi_\eps^2}\right)\left(\eps^2\sin^2u/\Phi_\eps\right)
   \le2\left(\eps^2\sin^2u/\Phi_\eps\right).
\]
Since $n\ge2$, $-2n\Lambda_\eps+\eps\partial_\eps\Lambda_\eps
\le-(2n-2)\Lambda_\eps\le0$. The prescribed-curvature integral in
\eqref{eq:dilation-residual-combined} has absolute value at most
$C\kappa^2V\|\Hpr_j-2n\|_{C^2}$. Using
\eqref{eq:global-support} and the central and normal-derivative
bounds above, while retaining the negative terms, gives
\begin{equation}\label{eq:other}
 \begin{aligned}
 \mathcal R_3\le{}&-(2n+1)\kappa^2\Delta P_\eps
       -(2n-2)\kappa^2\Lambda_\eps
       -\frac{\kappa^2\mathsf D^2}{(2n+1)P_0(\Omega)}\\
       &+C\|\Hpr_j-2n\|_{C^2}(V+\En).
 \end{aligned}
\end{equation}
\phantomsection\label{sec:absorption-centre}

\begin{proof}[Proof of Theorem~\ref{thm:absorption}]
Combine the master inequality \eqref{eq:master} with
\eqref{eq:joint-residual-bound}, \eqref{eq:scaling-bound} and
\eqref{eq:other}. Since $1/2-3/8=1/8$, this gives
\begin{equation}\label{eq:absorption}
 \begin{aligned}
 \En+\tfrac18\mathcal M_\eps
 &+(2n+1-\tfrac83)\kappa^2\Delta P_\eps
 +(2n-2)\kappa^2\Lambda_\eps
 +\frac{\kappa^2\mathsf D^2}{(2n+1)P_0(\Omega)}\\
 &\le C_0(\eps^2+\|\Hpr_j-2n\|_{C^2(U)})(V+\En).
 \end{aligned}
\end{equation}
All constants depend only on $n,\mathcal K,U$: the flux estimates
use $P_0(\Omega)/V\le C$ from stationarity, and the fixed dilation
cutoff and angular thresholds have bounded derivatives.
The fixed-$j$ curvature estimates are used only for the preceding
closure, before these uniform inequalities.

For a uniform positive lower bound on $\kappa$, put
$Z=\sum_k(x_kX_k+y_kY_k)$, so $\operatorname{div}Z=2n$.
Let $R=1+\sup_{\mathcal K}|z|$. Horizontal Gauss--Green gives
$2nV\le RP_0(\Omega)$, and $F\ge1$ gives
\[
 \kappa=\frac{I_F}{(2n+2)V}
 \ge\frac{P_0(\Omega)}{(2n+2)V}
 \ge\frac{2n}{(2n+2)R}>0.
\]
Choose $\delta_*>0$ with $C_0\delta_*\le1/2$.
Absorb the energy on the right of \eqref{eq:absorption} and use
$2n+1-8/3>0$. Restoring the index $j$ gives
\[
 \En_j+\mathcal M_{\eps_j}+\Delta P_j
 \le C(\eps_j^2+\|\Hpr_j-2n\|_{C^2(U)})|\Omega_j|.
\]
Equations~\eqref{eq:centre-derivative-bound} and \eqref{eq:coercive}
then imply \eqref{eq:full-centre-decay}.
\end{proof}

\Needspace{12\baselineskip}
\appendix
\section{Curvature energy and cutoff closure}
\label{app:admissible-variations}

At fixed $\eps=\eps_j>0$, the hypotheses of
Proposition~\ref{sel:prop:physical-form} imply
$\int(1+|\Aone|^2)\dd\sigma_1<\infty$.
The capacity cutoffs then close the Jacobi form on bounded speeds with
$|\nabla^1_\Sigma v|\le C_j(1+|\Aone|)$, including speeds whose
ambient fields move characteristic points. This supplies the finite
global pairings used in Sections~\ref{app:finite-master} and
\ref{sec:proof-main}.

\begin{proof}[Proof of Proposition~\ref{sel:prop:physical-form}]
Put $\eps=\eps_j$ and use $\Phi_\eps$ from \eqref{eq:anisotropy}.
Raising the $g_1$ unit conormal with $g_\eps$ gives
\begin{equation}\label{sel:eq:metric-normal}
 \Neps=\frac{N_H+\eps^2{\sin u}T}{\Phi_\eps},\qquad
 d\sigma_\eps=\eps^{-1}\Phi_\eps\,d\sigma_{1,j},
 \qquad v_X^\eps=v_X^1/\Phi_\eps.
\end{equation}
The volume-speed measure satisfies
\begin{equation}\label{sel:eq:metric-volume}
 \eps v_X^\eps\,d\sigma_\eps=v_X^1\,d\sigma_{1,j}.
\end{equation}
Here $\eps\le\Phi_\eps\le1$ and $\gTheta\le g_\eps\le\eps^{-2}\gTheta$.
For horizontal $X$, the connection difference is
\[
 (\nabla^\eps-\nabla^1)_XT
 =(\nabla^\eps-\nabla^1)_TX=(\eps^{-2}-1)JX,
\]
and it vanishes on horizontal pairs and on $(T,T)$.
The chain rule and $\cos u/\Phi_\eps\le1$ give
\[
 \nabla^1_\Sigma\Phi_\eps
 =\frac{(\eps^2-1)\sin u\cos u}{\Phi_\eps}\nabla^1_\Sigma u,
 \qquad |\nabla^1_\Sigma\Phi_\eps|\le1+|\Aone|.
\]
At fixed $\eps>0$, the normal map \eqref{sel:eq:metric-normal} and
its inverse have bounded derivatives on the unit sphere in the
left-invariant frame. Differentiation and the connection difference give,
with $C_j=C(n,\eps_j)\ge1$,
\begin{equation}\label{sel:eq:metric-comparison}
 \begin{gathered}
 d\sigma_{1,j}\le d\sigma_\eps\le\eps^{-1}d\sigma_{1,j},\qquad
 \eps|\nabla^1_{\Sigma_j}f|\le|\nabla^\eps_\Sigma f|
       \le|\nabla^1_{\Sigma_j}f|,\\
 C_j^{-1}(1+|\Aone_j|)\le1+|\Aeps|
       \le C_j(1+|\Aone_j|).
 \end{gathered}
\end{equation}
The product rule in both directions and \eqref{sel:eq:metric-comparison} give
\[
 \begin{gathered}
 \nabla^1_\Sigma v_X^\eps
 =\Phi_\eps^{-1}\nabla^1_\Sigma v_X^1
  -\Phi_\eps^{-2}v_X^1\nabla^1_\Sigma\Phi_\eps,\\
 C_j^{-1}\|v_X^1\|_j^2
 \le\int\bigl\{|\nabla^\eps_\Sigma v_X^\eps|^2
                 +(1+|\Aeps|^2)(v_X^\eps)^2\bigr\}\dd\sigma_\eps
 \le C_j\|v_X^1\|_j^2.
 \end{gathered}
\]

For a $g_\eps$-normal speed $v\in C_c^\infty(\Sigma_j^*)$,
the Riemannian second variation and \eqref{sel:eq:elliptic-metric} give
\begin{equation}\label{sel:eq:elliptic-metric-jacobi}
 \eps^{-1}\mathcal B^{\eps_j}_{1,j}(\Phi_\eps v,\Phi_\eps v)
 =\int\left\{|\nabla^\eps_\Sigma v|^2-
 (|\Aeps|^2+\Ric^\eps(\Neps,\Neps)
           +\partial_{\Neps}\Hpr_j)v^2\right\}\dd\sigma_\eps.
\end{equation}
This is the $f\equiv1$ formula in \cite[Theorem~2.8]{Verzellesi2026},
localized by \cite[Lemma~2.4]{Verzellesi2026}; stationarity cancels
the $\Heps-\Hpr_j$ acceleration terms.

Equation~\eqref{sel:eq:metric-comparison} gives finite $g_\eps$ area,
and $|\Ric^\eps(\Neps,\Neps)|
 +|\partial_{\Neps}\Hpr_j|\le C_j$.
Since $P_0(\Omega)>0$, choose $0\le\zeta\in C_c^\infty(\Sigma_j^*)$
with $\int\zeta\dd\sigma_{1,j}=1$.
For compactly supported $C^1$ speeds with $\int v\dd\sigma_\eps=0$,
smooth approximation on a fixed noncharacteristic support followed
by a correction with $\zeta$ preserves zero mean and passes stability.

For a cutoff $\eta_\ell$ from \textup{(H2)}, put
\[
 b_\ell=\frac{\int\eta_\ell\dd\sigma_\eps}{\int\zeta\dd\sigma_\eps},
 \qquad v_\ell=\eta_\ell-b_\ell\zeta.
\]
Since $\int\zeta\dd\sigma_\eps\ge1$, $b_\ell$ is bounded and
$\int v_\ell\dd\sigma_\eps=0$.
Stability in \eqref{sel:eq:elliptic-metric-jacobi},
$\eta_\ell^2\le2v_\ell^2+2b_\ell^2\zeta^2$, and bounded curvature on
$\spt\zeta$ give
\[
 \int\eta_\ell^2|\Aeps|^2\dd\sigma_\eps
 \le2\int|\nabla^\eps_\Sigma v_\ell|^2\dd\sigma_\eps+C_j
 \le C_j\left(1+\int|\nabla^\eps_\Sigma\eta_\ell|^2\dd\sigma_\eps\right).
\]
Equation~\eqref{sel:eq:metric-comparison}, \textup{(H2)} and Fatou give
$\int|\Aeps|^2\dd\sigma_\eps<\infty$, hence \eqref{sel:tag-3-4}.

For compactly supported $C^1$ $g_1$-normal speeds, Cauchy--Schwarz in
\eqref{sel:eq:elliptic-metric-jacobi} and the energy comparison give
$|\mathcal B^{\eps_j}_{1,j}(v,w)|\le C_j\|v\|_j\|w\|_j$.
For admissible $v,w$, \eqref{sel:tag-3-4} gives $\|v\|_j+\|w\|_j<\infty$.

Writing $\eta=\eta_{j,\ell}$, the product rule gives
\[
 \begin{aligned}
 \|\eta v-v\|_j^2
 &\le2\int(1-\eta)^2|\nabla^1_\Sigma v|^2\dd\sigma_{1,j}
       +2\|v\|_\infty^2\int|\nabla^1_\Sigma\eta|^2\dd\sigma_{1,j}\\
 &\quad+\int(1+|\Aone_j|^2)(1-\eta)^2v^2\dd\sigma_{1,j}\longrightarrow0.
 \end{aligned}
\]
The first and last terms vanish by dominated convergence,
the middle one by \textup{(H2)}.

\Needspace{5\baselineskip}
For two cutoffs $\eta,\widetilde\eta$, bilinearity gives
\[
 \begin{aligned}
 |\mathcal B^{\eps_j}_{1,j}(\eta v,\eta w)-\mathcal B^{\eps_j}_{1,j}(\widetilde\eta v,\widetilde\eta w)|
 \le C_j\bigl(&\|(\eta-\widetilde\eta)v\|_j\|\eta w\|_j\\
              &+\|\widetilde\eta v\|_j\|(\eta-\widetilde\eta)w\|_j\bigr).
 \end{aligned}
\]
This proves cutoff independence. Also
$\|(\eta-1)v\dd\sigma_{1,j}\|_{\rm TV}
=\int(1-\eta)|v|\dd\sigma_{1,j}\to0$
by dominated convergence.

If $\int v\dd\sigma_{1,j}=0$, the same $\zeta$ gives
\[
 \begin{gathered}
 v_\ell=\eta_{j,\ell}v
          -\left(\int\eta_{j,\ell}v\dd\sigma_{1,j}\right)\zeta,
 \qquad
 \int v_\ell\dd\sigma_{1,j}=0,
 \\
 \int\eta_{j,\ell}v\dd\sigma_{1,j}\to0,\qquad
 \|v_\ell-v\|_j\to0.
 \end{gathered}
\]
By \eqref{sel:eq:metric-volume}, $v_\ell/\Phi_\eps$ has zero
$d\sigma_\eps$ mean. Stability for compactly supported $C^1$ speeds gives
$\mathcal B^{\eps_j}_{1,j}(v_\ell,v_\ell)\ge0$;
continuity gives $\mathcal B^{\eps_j}_{1,j}(v,v)\ge0$.
\Needspace{8\baselineskip}
Finally, Cauchy--Schwarz gives
\[
 \int(1+|\Aone_j|)|\nabla^1_\Sigma\eta_{j,\ell}|\dd\sigma_{1,j}
 \le\left(2\int(1+|\Aone_j|^2)\dd\sigma_{1,j}\right)^{1/2}
       \left(\int|\nabla^1_\Sigma\eta_{j,\ell}|^2\dd\sigma_{1,j}\right)^{1/2}
 \longrightarrow0
\]
by \eqref{sel:tag-3-4} and \textup{(H2)}, proving \eqref{eq:cap-est}.

\phantomsection\label{eq:ambient-hessian-closure}
At fixed $j$, the ambient Hessian is continuous in the surface
$H^1$ norm of vector-field traces. Indeed, in Euclidean coordinates
the perimeter integrand is a smooth function of the base point and
the nonzero oriented conormal. The cofactor formula for
$x\mapsto x+sX(x)+rY(x)$ involves only the surface traces and their
first tangential derivatives; its coefficients are bounded on the
compact set for fixed $\eps_j>0$. For example, with Euclidean unit
normal $\nu$, tangent frame $E_i$ and $v=\langle X,\nu\rangle$,
the second derivative of the prescribed-volume term along
$x+sX(x)$ has boundary density
\[
 (D\Hpr_j\cdot X)v
 +\Hpr_j\left(v\operatorname{div}_\Sigma X
       -\sum_i\langle\nu,D_{E_i}X\rangle\langle X,E_i\rangle\right).
\]
Polarization gives the same first-derivative bound for mixed terms.
Stationarity removes the acceleration term when another ambient
extension or variation is used. Euclidean and $g_1$ surface norms
are equivalent on the fixed compact set. Consequently
\[
 |D^2(P_\eps-\textstyle\int_\Omega\Hpr_j)[X,Y]|
 \le C_j\|X\|_{H^1(\Sigma,g_1)}\|Y\|_{H^1(\Sigma,g_1)}.
\]
For bounded fields with
$|\nabla^1_\Sigma X|\le C_j(1+|\Aone|)$, the curvature bound and
\textup{(H2)} give $\eta_{j,\ell}X\to X$ in this vector-field norm.
On compact regular supports, stationarity identifies the ambient
Hessian with the normal Jacobi pairing. The vector-field convergence
and the scalar convergence in $\|\cdot\|_j$ therefore identify their
limits. In particular, this argument uses the weighted scalar norm
$\int(|\nabla^1_\Sigma v|^2+(1+|\Aone|^2)v^2)\dd\sigma_1$;
ordinary scalar $H^1$ continuity is not needed.
\end{proof}

\section{Algebraic details for the quantitative estimate}
\label{app:quantitative-algebra}

For \eqref{eq:dota}, use
$\grad v=\nabla^1_\Sigma v-(e_0v)e_0$, with $\grad v\perp e_0$. Then
\[
(\dot e_0)^\top= ve_n +({\sin u}/{\cos u})\grad v,\quad
 \nabla^{1,\Sigma}_{e_0}e_0=-({\sin u}/{\cos u})(\AngleError + e_n ),\quad
 \operatorname{Sec}^1(\None,e_0)=1.
\]
The sectional-curvature value follows because
$\operatorname{span}\{\None,e_0\}=\operatorname{span}\{e_{2n},T\}$ is a
horizontal--vertical plane for $\gTheta$.
Differentiating $\Amer=\ip{\Aone e_0}{e_0}$ and retaining both moving
$e_0$ slots gives
\[
 \dot{\Amer}=-\operatorname{Hess}^1_\Sigma v(e_0,e_0)
       -v(|\Aone e_0|^2+1)+2\langle \Aone e_0,(\dot e_0)^\top\rangle.
\]
Substitute $\Aone e_0=\AngleError+\Amer e_0$ from the block decomposition and
$\operatorname{Hess}^1_\Sigma v(e_0,e_0)
=e_0(e_0v)-\langle\nabla^{1,\Sigma}_{e_0}e_0,\nabla^1_\Sigma v\rangle$.
The gradient terms combine as
\[
 -\frac{\sin u}{\cos u}\ip{\AngleError+e_n}{\grad v}
 +2\frac{\sin u}{\cos u}\ip{\AngleError}{\grad v}
 =\frac{\sin u}{\cos u}\ip{\grad u}{\grad v}.
\]
The terms multiplying $v$ combine to
$-|\Amer|^2-|\AngleError|^2-1+2\langle\AngleError,e_n\rangle=-|\Amer|^2-|\grad u|^2$.
These terms give \eqref{eq:dota}.

At fixed $\eps$, differentiate the second identity in \eqref{eq:He},
insert $\dot \Hone$ and \eqref{eq:dota}, and integrate the two
second-derivative terms by parts. The terms containing
$\partial_u(\eps^2/\Phi_\eps^3)$ cancel.
Use $\nabla^1_\Sigma u=\grad u+\Amer e_0$ and
$\operatorname{div}^1_\Sigma e_0=-({\sin u}/{\cos u})(\Hone-\Amer)$.
The remaining first-order terms are
$w\bigl(\langle\grad u,\grad v\rangle+(\Amer-\Hone)e_0v\bigr)$
times the coefficient
\[
 \partial_u\left(\frac{\mu_\eps-1}{\cos u}\right)
 +\left(\frac{\eps^2}{\Phi_\eps^3}
             -\frac{\mu_\eps-1}{\cos u}\right)\tan u
 =\frac{\partial_u\mu_\eps+(\eps^2/\Phi_\eps^3)\sin u}{\cos u}=0.
\]
This proves \eqref{eq:common}.

For the three speeds,
$\nabla^1_\Sigma v_i^1=\partial_u v_i^1(\grad u+\Amer e_0)$.
By \eqref{eq:coeff},
$[\partial_uv^1\,\partial_uv^1]_*=-3{\cos^{4} u}$ contracts the principal term in
\eqref{eq:common} to
\[
 -3{\cos^{4} u}\left(\frac{\mu_\eps-1}{\cos u}|\grad u|^2
                         +\frac{\eps^2}{\Phi_\eps^3}|\Amer|^2\right),
\]
Adding the zeroth-order contraction, with $[v^1v^1]_*={\cos^{4} u}$, and using
$|\Aone|^2=|\Aone_{HH}|^2+2|\AngleError|^2+|\Amer|^2$
gives the $d\sigma_1$-density
\[
 \begin{aligned}
 &-{\cos^{4} u}\biggl\{\frac{\mu_\eps-1}{\cos u}
      (|\Aone_{HH}|^2+2|\AngleError|^2+2|\grad u|^2+\Ric^1(\None,\None))
        +\frac{\eps^2}{\Phi_\eps^3}(|\grad u|^2+4|\Amer|^2)\biggr\}.
 \end{aligned}
\]
Since $|\grad u|^2=|\AngleError|^2-2\mathsf E_{u,n}+1$,
\[
 \begin{aligned}
 2|\AngleError|^2+2|\grad u|^2+\Ric^1(\None,\None)
 &=4|\AngleError|^2-4\mathsf E_{u,n}+2+2n-2(n+1){\cos^{2} u}\\
 &=4|\AngleError|^2-4\mathsf E_{u,n}+2(n+1){\sin^{2} u}.
 \end{aligned}
\]
Finally, $d\sigma_1=(\cos u)^{-1}d\sigma_H$ converts the two
weights to $\cos^2u(1-\mu_\eps)$ and $-\eps^2\mu_\eps^3$,
giving \eqref{eq:star}.

The polynomial of Section~\ref{sec:joint-density} expands on
$\Sigma^*$ as
\[
\begin{aligned}
 p_\eps(s)={}&s\biggl\{
  \cos^2u\sin^2u(\Heps+s)-2\cos^2u(2-\mathsf E_{u,n})\\
 &\qquad+\frac{\cos u(3-4\cos^2u)\Phi_\eps^3}{\eps^2}
              [\Heps(1-\mu_\eps)-\mu_\eps s]\biggr\}\\
 &+\frac{4(n-1)\cos u(1-\mu_\eps)\Phi_\eps^3}{\eps^2}
              [\Heps(1-\mu_\eps)-\mu_\eps s]\\
 &-\frac{2\mu_\eps^3\Phi_\eps^6}{\eps^2}
              [\Heps(1-\mu_\eps)-\mu_\eps s]^2.
\end{aligned}
\]
Its quadratic coefficient is
\[
 \frac12\partial_s^2p_\eps
 =-\frac{\cos^2u\Phi_\eps^2}{\eps^2}
       (2-4\cos^2u+2\cos^2u\mu_\eps+\mu_\eps^2).
\]
Since $\Phi_\eps^2=\cos^2u+\eps^2\sin^2u$ and $0<\eps\le1$,
$\cos u\le\mu_\eps\le1$. Hence
\[
\begin{aligned}
 2-4\cos^2u+2\cos^2u\mu_\eps+\mu_\eps^2
 &=1+(1-\cos u)^2(2\cos u+1)\\
 &\quad+(\mu_\eps-\cos u)(\mu_\eps+\cos u+2\cos^2u)\ge1.
\end{aligned}
\]
In particular, $\tfrac12\partial_s^2p_\eps\le-\cos^2u\Phi_\eps^2/\eps^2$.
Rationalization gives
\[
 \frac{(1-\mu_\eps)\Phi_\eps^3}{\eps^2}
 =\frac{\sin^2u\Phi_\eps^2}{\Phi_\eps+\cos u}.
\]
Differentiating the polynomial at $s=0$ therefore gives
\[
\begin{aligned}
 \partial_sp_\eps(0)={}&\Heps\biggl\{
  \cos^2u\sin^2u
  +\frac{\cos u(3-4\cos^2u)\sin^2u\Phi_\eps^2}{\Phi_\eps+\cos u}\\
 &\qquad+\frac{4\cos^4u\sin^2u\Phi_\eps}{\Phi_\eps+\cos u}\biggr\}
 -2\cos^2u(2-\mathsf E_{u,n})
 -\frac{4(n-1)\cos^2u\sin^2u\Phi_\eps}{\Phi_\eps+\cos u}.
\end{aligned}
\]
Since $0<\cos u\le\Phi_\eps\le1$, $|3-4\cos^2u|\le3$ and
$|\Heps|\le2n+1$,
\[
\begin{aligned}
 |\partial_sp_\eps(0)|
 &\le C\bigl\{\cos^2u(1+|\mathsf E_{u,n}|)
                  +\cos u\Phi_\eps+\cos^4u\bigr\}\\
 &\le C\cos u\Phi_\eps(1+|\mathsf E_{u,n}|).
\end{aligned}
\]
For the constant coefficient, discard its nonpositive square to obtain
\[
 p_\eps(0)
 \le C\frac{\eps^2\cos u\Phi_\eps\sin^4u}{(\Phi_\eps+\cos u)^2}
 \le C\eps^2,
\]
using $4\cos u\Phi_\eps\le(\Phi_\eps+\cos u)^2$.
On $\Sigma^*$, $\cos u>0$ makes the constant second
derivative negative. Completing the square gives
\[
\begin{aligned}
 p_\eps(s)
 &=\frac{\partial_s^2p_\eps}{2}
     \left(s+\frac{\partial_sp_\eps(0)}{\partial_s^2p_\eps}\right)^2
       +p_\eps(0)-\frac{|\partial_sp_\eps(0)|^2}{2\partial_s^2p_\eps}\\
 &\le C\eps^2(1+\mathsf E_{u,n}^2).
\end{aligned}
\]
Taking $s=\mathsf E_H$ yields \eqref{eq:joint-global-square}.

Under the dilation flow, change of variables gives
\[
 \int_{\Dil_{e^s}\Omega}\Hpr_j\dd x
 =e^{(2n+2)s}\int_\Omega\Hpr_j(\Dil_{e^s}x)\dd x.
\]
Twice differentiating this identity and the scaling law for $P_\eps$ gives
\[
 \begin{aligned}
 \delta_{\Dil}^2\int_\Omega\Hpr_j\dd x
 &=\int_\Omega\{\Dil^2\Hpr_j+2(2n+2)\Dil\Hpr_j+(2n+2)^2\Hpr_j\}\dd x,\\
 \delta_{\Dil}^2P_\eps
 &=(2n+1)^2P_\eps-2(2n+1)\Lambda_\eps+\eps\partial_\eps\Lambda_\eps.
 \end{aligned}
\]
Here $\Dil^2\Hpr_j=\Dil(\Dil\Hpr_j)$. Stationarity gives
$(2n+1)P_\eps-\Lambda_\eps=\int_\Omega(\Dil\Hpr_j+(2n+2)\Hpr_j)\dd x$.
Substituting these formulas and $I_{\Dil}=(2n+2)V$ into the definitions of
$\mathsf D$ and $\mathcal R_{\Dil}(\Dil)$ yields
\[
 \begin{aligned}
 \mathsf D&=(2n+1)\Delta P_\eps-\Lambda_\eps
       -\int_\Omega\{\Dil\Hpr_j+(2n+2)(\Hpr_j-2n)\}\dd x,\\
 \mathcal R_{\Dil}(\Dil)&=-(2n+1)\Lambda_\eps+\eps\partial_\eps \Lambda_\eps\\
     &\quad-\int_\Omega\{\Dil^2\Hpr_j+(2n+3)\Dil\Hpr_j+(2n+2)(\Hpr_j-2n)\}\dd x.
 \end{aligned}
\]
Since $\kappa=I_F/I_{\Dil}$ and $2n I_{\Dil}=(2n+1) P_0(\Omega)+\mathsf D$,
\[
 \frac{2n\mathsf D\,I_F^2}{I_{\Dil}(2n+1) P_0(\Omega)}
 =\kappa^2\frac{2n I_{\Dil}\mathsf D}{(2n+1) P_0(\Omega)}
 =\kappa^2\left(\mathsf D+\frac{\mathsf D^2}{(2n+1) P_0(\Omega)}\right).
\]
Subtracting the formulas for $\mathcal R_{\Dil}(\Dil)$ and $\mathsf D$
gives \eqref{eq:dilation-residual-combined}.

\section{The stationary Hessian for normal variations}
\label{app:graph-hessian}

Let $h$ be a smooth ambient function and suppose
$P_\eps-\int_\Omega h\dd x$ is stationary on a smooth patch.
All speeds below have compact support in its noncharacteristic part.
Write $\zeta=\langle\None,T\rangle$, $T^\top=T-\zeta\None$ and
$K_1=|\Aone|^2+\Ric^1(\None,\None)$.
For the $g_1$-normal geodesic variation of speed $v$, its surface
Jacobian $\mathcal J_s$ and normal component satisfy
\[
 \dot\zeta=-T^\top v,\qquad
 \ddot\zeta=-\zeta|\nabla^1_\Sigma v|^2
             +2v\langle\nabla^1_\Sigma\zeta,\nabla^1_\Sigma v\rangle,
 \qquad \dot{\mathcal J}=\Hone v,\quad
 \ddot{\mathcal J}=|\nabla^1_\Sigma v|^2+({\Hone}^2-K_1)v^2.
\]
The first two identities follow by differentiating normal
orthogonality and using that $T$ is a unit Killing field.
For a density $f(\zeta)$, first variation gives
$h=f\Hone+\operatorname{div}^1_\Sigma(f'T^\top)$.
Moreover,
$\operatorname{div}^1_\Sigma T^\top=-\zeta\Hone$ and
$\Delta^1_\Sigma\zeta+K_1\zeta=T^\top\Hone$.
Expanding the second variation gives the cross term
$2f'v\langle\nabla^1_\Sigma\zeta,\nabla^1_\Sigma v\rangle
-2f'\Hone vT^\top v$.
Integrating it by parts, using stationarity and the last two
identities, and polarizing gives
\begin{equation}\label{eq:full-normal-hessian}
 \begin{aligned}
 \mathcal B_h(v,w)=\int\bigl\{&
 (f-\zeta f')\langle\nabla^1_\Sigma v,\nabla^1_\Sigma w\rangle
 +f''(T^\top v)(T^\top w)\\
 &-[(f-\zeta f')K_1+f''|\nabla^1_\Sigma\zeta|^2
                         +\partial_{\None}h]vw\bigr\}\dd\sigma_1.
 \end{aligned}
\end{equation}
For $f(\zeta)=\sqrt{1+(\eps^2-1)\zeta^2}=\Phi_\eps$,
$f-\zeta f'=\Phi_\eps^{-1}$ and
$f''=(\eps^2-1)\Phi_\eps^{-3}$.
Taking instead $f(\zeta)=\sqrt{1-\zeta^2}=\cos u$ on this patch,
and subtracting the corresponding forms with their normal forcing
terms removed, recovers \eqref{eq:common}: use
$T^\top=(\cos u)e_0$ and
$|\nabla^1_\Sigma u|^2=|\grad u|^2+|\Amer|^2$.

The same normal formula also gives a direct derivation of
\eqref{eq:normal-horizontal-comparison}. Put $c=\cos u$, $s=\sin u$
and $a=\Amer=e_0u$. The block decomposition \eqref{eq:blocks},
the Ricci formula and \eqref{eq:bounded-potential} give
\begin{equation}\label{eq:transport-intermediate}
 c^2q_H=K_1-a^2-2|\grad u|^2,\qquad
 c\Delta_Hu+s(K_1-a^2)=c^2e_0\Hhor,
 \quad \Delta_H=\divh\grad.
\end{equation}
For the second identity, use
$\operatorname{div}^1_\Sigma Z=\divh Z+\tan u\langle\grad u,Z\rangle$
for horizontal tangent $Z$, together with
$\operatorname{div}^1_\Sigma e_0=-s\Hhor$ and
$\Hone=c\Hhor+a$, in the Killing identity
$\Delta^1_\Sigma s+K_1s=c\,e_0\Hone$.
For $f(\zeta)=\sqrt{1-\zeta^2}$, removing the normal forcing term
from \eqref{eq:full-normal-hessian} gives
\[
 \Qonehor(v,w)=\int c^{-2}
 \left\{\langle\grad v,\grad w\rangle
       -(K_1-|\nabla^1_\Sigma u|^2)vw\right\}\dd\sigma_H.
\]
Expand $\Qhor(v/c,w/c)$ and integrate its mixed gradient term by
parts. The two identities in \eqref{eq:transport-intermediate}
and $|\nabla^1_\Sigma u|^2=|\grad u|^2+a^2$ give
\[
 \Qonehor(v,w)-\Qhor(v/c,w/c)
   =\int\frac{s}{c^2}(e_0\Hhor)vw\dd\sigma_H.
\]
All integrations here have compact noncharacteristic support;
the global cutoff passage is the one in
Lemma~\ref{lem:global-identities}.

\Needspace{22\baselineskip}

\end{document}